\newtheorem{theorem}{Theorem}[section]
\newtheorem{lemma}[theorem]{Lemma}
\newtheorem{proposition}[theorem]{Proposition}
\newtheorem{remark}[theorem]{Remark}
\def\square{\hbox{\vrule\vbox{\hrule\phantom{o}\hrule}\vrule}}
\def\R{\mathbb {R}}
\def\C{\mathbb {C}}
\def\N{\mathbb {N}}
\def\Z{\mathbb {Z}}
\def\tendsto{\rightarrow}
\def\re{\mathop{\rm Re}\nolimits}
\def\im{\mathop{\rm Im}\nolimits}
\def\la{\langle}
\def\ra{\rangle}
\def\O{\mathcal O}
\newcommand{\be}{\begin{equation}}
\newcommand{\ee}{\end{equation}}
\numberwithin{equation}{section}
\begin{document}

\title[Lower bound of resonances for Helmholtz resonator]{Optimal lower bound of the resonance widths for the Helmholtz Resonator}
 
\begin{abstract}
Under a geometric assumption on the region near the end of its neck, we prove an optimal exponential 
lower bound on the widths of resonances for a general two-dimensional Helmholtz resonator. An extension 
of the result  to the $n$-dimensional case, $n\leq 12$, is also obtained.
\end{abstract}

\author{Andr\'e Martinez  \& Laurence N\'ed\'elec}

\subjclass[2000]{Primary 81Q20 ; Secondary 35P15, 35B34}
\keywords{Helmholtz resonator, scattering resonances, lower bound}

\maketitle

\addtocounter{footnote}{1}

\footnotetext{A.M.: Universit\`a di Bologna, Dipartimento di Matematica, Piazza di Porta San Donato 5, 40127 Bologna,
Italy. Partly supported by Universit\`a di Bologna, Funds for Selected Research Topics and Founds for Agreements with
Foreign Universities
 
L.N.: Department of Mathematics, Stanford University, Stanford, California 94305}

\setcounter{section}{0}

\section{Introduction}

A resonator consists of a bounded cavity (the chamber) connected to the exterior by a thin tube (the neck of the chamber). 
The frequencies of the sounds it produces are determined by the shape of the chamber, while their duration by the length and the width of the neck  in a non-obvious way,
and our goal is to understand these. Mathematically, this phenomenon is described by the resonances of the
Dirichlet Laplacian $-\Delta_\Omega$ on the domain $\Omega$ consisting of the union of the chamber, 
the neck and the exterior (see Figure  \ref{fig1}).

 \begin{figure}[h]
 \label{fig1}

 \vspace*{0ex}
 \centering
 \includegraphics[width=0.8\textwidth,angle=0]{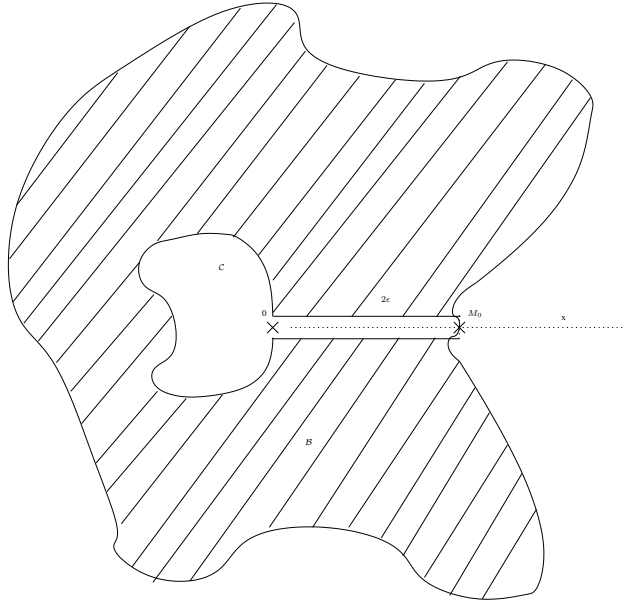}
 \vspace*{0ex}
 \caption{The Helmholtz resonator}
 \end{figure}

This article extends our previous work \cite{MN}, in that we are now able to handle regions where the shape of the exterior 
is quite general, although the shape of the neck stays the same. The main changes appear in sections \ref{largedisc},
\ref{eastob} and \ref{eastbound}, where Carleman estimates are used, and Green's identity is replaced by an estimate to 
obtain a lower bound on the imaginary part of the resonances. 

We recall that resonances are the eigenvalues of a complex deformation of $-\Delta_\Omega$;  their real and imaginary
parts are the frequencies and inverses of the half-lives, respectively, of the corresponding vibrational modes. It is of obvious physical 
interest to estimate these two quantities as precisely as  possible. One practical way to do this involves studying 
this problem in the asymptotic limit when the width $\varepsilon$ of the neck tends to zero. Those resonances whith imaginary parts 
tending to zero converge to the eigenvalues of the Dirichlet Laplacian on the cavity, and there is an exponentially small upper bound 
for the absolute values of the imaginary parts (the widths) of the resonances \cite{HM}. However, without very 
restrictive hypotheses, no lower bound is known. We mention in particular that lower bounds are known in
the one-dimensional case \cite{Ha, HaSi}. As for the higher dimensional case, we mention \cite{FL, Bu2, HS} 
which contain results concerning exponentially small widths of quantum resonances, but these do not apply to a Helmholtz resonator.
We also mention that the semiclassical lower bound obtained in \cite{HS} is optimal (see also \cite{FLM} for a generalization).

Here, we obtain an optimal lower bound (see Theorem \ref{mainth}) under a geometric condition concerning the external end part of the neck. Namely, we assume that the neck meets the boundary of the external region perpendicularly  to it, and that the exterior region is concave and symmetric there (see (\ref{hypgeom}) and Figure 1). This assumption is probably purely technical and should not be necessary. However, it permits us to adapt to this
case some of the arguments of \cite{MN}, in order to obtain the lower bound after reducing the problem to an estimate near the end part of the neck. 
This reduction itself is obtained using Carleman estimates up to the boundary, as in \cite{LL,LR}.

{\bf Acknowledgements} The authors wish to thank J. Sj\"ostrand and M. Zworski for their useful suggestions, and T. Ramond for interesting discussions. We also thank T. Duyckaerts for having pointed out a mistake to us in a previous version of the manuscript.

\section{Geometrical description and results}

Consider a Helmholtz resonator in $\R^2$ consisting of a regular bounded open set  $\mathcal C$ (the cavity), connected 
to a regular unbounded open exterior domain $\mathbf E$ through a thin straight tube ${\mathcal T}(\varepsilon)$ (the neck)
of radius $\varepsilon >0$ (see figure 2). We shall suppose that $\varepsilon$ is very small.

To state this more precisely, let $\mathcal C$ and $\mathcal B$ be two bounded domains in $\R^2$ with $\mathcal C^\infty$ boundary; 
their closures and boundaries are denoted $\overline{\mathcal C}$, $\overline{\mathcal B}$ and $\partial{\mathcal C}$, 
$\partial{\mathcal B}$. We assume that Euclidean coordinates $(x,y)$ can be chosen in such a way that, for some 
$L >0$,  one has, 
\be
\label{hypgeom}
\begin{split}
&\overline{\mathcal C}\subset \mathcal B\quad; \quad (0,0)\in \partial{\mathcal C}\quad ;\quad (L,0)\in \partial{\mathcal B}\quad;\quad
{[0,L]}\times\{0\}\subset \overline{\mathcal B}\backslash {\mathcal C}\quad;\\
&
\mbox{Near } M_0:=(L,0),\, {\mathcal B} \mbox{ is convex and } \partial{\mathcal B}
\mbox{ is symmetric with}\\ & \mbox{respect to } \{y=0\}.
\end{split}
\ee
\begin{remark}\sl This also contains the case where $\partial{\mathcal B}$ is flat near $M_0$, that is when $\{ L\} \times[-\varepsilon_0,\varepsilon_0] \subset \partial{\mathcal B}$ for some $\varepsilon_0>0$.
\end{remark}
Setting ${\mathcal T}(\varepsilon):= [-\varepsilon_0, L]\times (-\varepsilon, \varepsilon) \cap (\R^2\backslash {\mathcal C})$, 
$\mathcal C(\varepsilon) = {\mathcal C} \cup \mathcal{T}(\varepsilon)$ and ${\mathbf E}:= \R^2 \backslash \overline{\mathcal B}$, then
the resonator is defined as, 
$$\Omega(\varepsilon):={\mathcal C}(\varepsilon) \cup{\mathbf E}.$$
As $\varepsilon \tendsto 0^+$, the resonator $\Omega(\varepsilon)$ collapses to $\Omega_0:={\mathcal C}\cup [0,M_0]
\cup{\mathbf E}$, where $M_0$ is the point $(L,0)\in\R^2$. 

For any domain $Q$, let $P_Q$ denote the Laplacian $-\Delta_Q$ with Dirichlet boundary conditions on $\partial Q$;
for brevity, we write $P_{\Omega_\varepsilon}$ as $P_\varepsilon$. 

The resonances of $P_\varepsilon$ are defined as the eigenvalues of the operator obtained by performing a complex dilation with
respect to the  coordinates $(x,y)$, for $|x| + |y|$ large. We are interested in those
resonances of $P_\varepsilon$ that are close to the eigenvalues of $P_{\mathcal C}$. Thus let $\lambda_0 >0$ be an eigenvalue 
of $P_{\mathcal{C}}$ with $u_0$ the  corresponding (normalized) eigenfunction. We make the following

\noindent Assumption ({\bf H}):
\begin{itemize}
\item[] $\lambda_0$ is simple;
\item[] $u_0$ does not vanish on ${\mathcal C}$ near the point $(0,0)$.
\end{itemize}
Note that these properties are automatically satisfied when $\lambda_0$ is the lowest eigenvalue of  $-\Delta_{\mathcal{C}}$. When
$\lambda_0$ is a higher eigenvalue, then the last property means that $0$ does not lie on the closure of a nodal line of $u_0$. 

By the arguments of \cite{HM}, we know that there is a resonance $\rho(\varepsilon)\in\mathbb{C}$ of  $P_\varepsilon$ such that 
$\rho(\varepsilon)\to \lambda_0$ as $\varepsilon\to 0$. Furthermore, there is an eigenvalue $\lambda(\varepsilon)$ of 
$P_{\mathcal{C}(\varepsilon)}$ such that, for any $\delta>0$, 
\be
|\rho(\varepsilon)-\lambda(\varepsilon)|\leq C_\delta e^{-\pi(1-\delta)L/\varepsilon},
\ee
for some $C_\delta>0$ and all sufficiently small $\varepsilon>0$. In particular, since $\lambda (\varepsilon) \in \mathbb R$, 
this gives 
\be
\label{upperbound}
|\im \rho(\varepsilon)|\leq C_\delta e^{-\pi(1-\delta)L/\varepsilon}.
\ee

We now state our main result.
\begin{theorem}\sl 
\label{mainth}
Under Assumption {\bf(H)}, for any $\delta >0$ there exists $C_\delta >0$ such that, for all 
$\varepsilon >0$ small enough, one has
$$
|\im \rho(\varepsilon)|\geq  \frac1{C_\delta}e^{-\pi (1+\delta)L/\varepsilon}.
$$
\end{theorem}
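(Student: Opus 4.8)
The plan is to organize the argument around the resonance--width identity. Let $u=u_\varepsilon$ denote a resonant state associated with $\rho(\varepsilon)$, normalized by $\|u\|_{L^2(\mathcal C)}=1$; by \cite{HM,MN} one has $u=u_0+o(1)$ in $\mathcal C$, while $u$ is exponentially small away from $\mathcal C$. Fix a small $\eta>0$, set $\Sigma_\eta:=\{L-\eta\}\times(-\varepsilon,\varepsilon)$, and let $\Omega_{\rm int}$ be the part of $\Omega(\varepsilon)$ lying on the cavity side of $\Sigma_\eta$. On the bounded set $\Omega_{\rm int}$ the resonant state solves the (undeformed) equation $P_\varepsilon u=\rho(\varepsilon)u$ and vanishes on the Dirichlet boundary, so Green's formula gives
\be
\label{E:fluxid}
\im\rho(\varepsilon)\,\|u\|_{L^2(\Omega_{\rm int})}^2=\im\int_{\Sigma_\eta}\bar u\,\partial_x u\,dy ,
\ee
with $\|u\|_{L^2(\Omega_{\rm int})}^2=1+o(1)$; so everything reduces to bounding the modulus of the flux on the right-hand side of \eqref{E:fluxid} from below by $C_\delta^{-1}e^{-\pi(1+\delta)L/\varepsilon}$.

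The first ingredient is to propagate $u$ down the neck. On the product region $(0,L)\times(-\varepsilon,\varepsilon)$ I would expand $u(x,y)=\sum_{k\ge1}c_k(x)\varphi_k(y/\varepsilon)$ in the Dirichlet modes $\varphi_k$ of $(-1,1)$, the ground one being $\varphi_1(t)=\cos(\pi t/2)$, with transverse frequency $\pi/(2\varepsilon)$; the $c_k$ solve $-c_k''+\bigl((k\pi/2\varepsilon)^2-\rho(\varepsilon)\bigr)c_k=0$ up to a coupling that is $o(1)$ near the two ends, where the domain ceases to be a product. At $x$ close to $L$, the higher modes $(k\ge2)$ and the exponentially growing branch of $c_1$---whose coefficient is essentially the reflection coefficient of the outgoing exterior problem, hence exponentially small---are negligible next to the decaying branch of $c_1$, so that $u|_{\Sigma_\eta}=c_1(L-\eta)\varphi_1(\cdot/\varepsilon)(1+o(1))$, and likewise for $\partial_x u|_{\Sigma_\eta}$. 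The value of $c_1$ at the cavity end of the neck is bounded below by a positive constant independent of $\varepsilon$, precisely because Assumption {\bf(H)} forces $(0,0)$ not to lie on a nodal line of $u_0$; since $\sqrt{(\pi/2\varepsilon)^2-\rho(\varepsilon)}\le\pi/(2\varepsilon)$, an elementary ODE comparison (a Gronwall/energy estimate absorbing the mode coupling and the corner corrections) then yields
\be
\label{E:neckbd}
|c_1(L-\eta)|\ \ge\ \frac1{C_\delta}\,e^{-\pi(1+\delta)(L-\eta)/(2\varepsilon)}
\ee
for every $\delta>0$ and all sufficiently small $\varepsilon$.

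The crux---and the genuinely new part relative to \cite{MN}---is to convert this surviving amplitude into a lower bound for the flux, namely
$$
\Bigl|\,\im\int_{\Sigma_\eta}\bar u\,\partial_x u\,dy\,\Bigr|\ \ge\ \frac1{C_\delta}\,e^{-\delta/\varepsilon}\,|c_1(L-\eta)|^2 ,
$$
which, together with \eqref{E:fluxid}, \eqref{E:neckbd} and $\|u\|_{L^2(\Omega_{\rm int})}^2=1+o(1)$, would give the theorem after renaming $\delta$. The flux on $\Sigma_\eta$ equals, up to benign factors, the power radiated by $u$ to infinity. When $\partial\mathcal B$ is flat near $M_0$---the case of the Remark, essentially the setting of \cite{MN}---the region beyond $\Sigma_\eta$ is an explicit half-plane with an $\varepsilon$-wide aperture, and a computation of its outgoing Dirichlet-to-Neumann map, evaluated on $u|_{\Sigma_\eta}\approx c_1(L-\eta)\varphi_1(\cdot/\varepsilon)$, shows that this radiated power has modulus $\gtrsim|c_1(L-\eta)|^2/\log^2(1/\varepsilon)$, far more than the $e^{-\delta/\varepsilon}|c_1(L-\eta)|^2$ that is needed. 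For a general exterior satisfying \eqref{hypgeom} no such formula is available, and I would instead establish, near $M_0$, Carleman estimates up to the boundary in the spirit of \cite{LL,LR}. The geometric hypothesis---that the neck meets $\partial\mathbf E$ orthogonally and that $\partial\mathbf E$ is convex and symmetric about $\{y=0\}$ there---is exactly what makes a Carleman weight adapted to the product structure of the neck remain pseudoconvex up to the curved, reflecting boundary, while the symmetry permits splitting $u$ into its even and odd parts in $y$ and retaining the even part (which carries $\varphi_1$ and satisfies a Neumann condition on $\{y=0\}$), thereby reducing the exterior model near $M_0$ to a half-domain comparable to that of \cite{MN}. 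The Carleman estimate would then transfer the lower bound $\|u\|_{L^2(\Sigma_\eta)}^2\gtrsim\varepsilon\,|c_1(L-\eta)|^2$ into the required lower bound on the radiated power, at the cost of an acceptable $e^{\delta/\varepsilon}$ loss.

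The main obstacle is precisely this last step. Being a \emph{lower} bound, soft unique continuation delivers only non-vanishing and not a rate: the Carleman inequality has to be made quantitative with a large parameter commensurate with the transverse frequency $\pi/(2\varepsilon)$ so as not to lose more than $e^{\delta/\varepsilon}$; its weight has to be chosen with level surfaces that are pseudoconvex simultaneously for the flat neck walls and for the curved exterior boundary---this is exactly where the orthogonality, convexity and symmetry in \eqref{hypgeom} are used---and one must exclude the possibility that the general exterior traps or phase-cancels the outgoing mode. Running alongside all of this is the bookkeeping needed to keep every error discarded in \eqref{E:fluxid}, in the neck expansion, and in the Carleman step below the main term $e^{-\pi(1+\delta)L/\varepsilon}$; for this the a priori upper bound \eqref{upperbound} is invoked repeatedly.
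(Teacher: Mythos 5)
Your reduction and your neck analysis are reasonable, but the step you yourself flag as the crux --- the bound $\bigl|\im\int_{\Sigma_\eta}\bar u\,\partial_x u\,dy\bigr|\geq C_\delta^{-1}e^{-\delta/\varepsilon}|c_1(L-\eta)|^2$ --- is exactly the whole theorem, and the tool you propose for it cannot deliver it. Carleman estimates (as in \cite{LL,LR}) propagate \emph{smallness} of $u$ from one region to another; they can convert a lower bound on an $L^2$ norm in one set into a lower bound on an $L^2$ norm in another set, but the quantity you must bound from below is the signed, oscillatory flux $\im\int\bar u\,\partial_x u$, which undergoes massive cancellation in the neck (it equals $\im\rho\cdot\|u\|^2_{L^2(\Omega_{\rm int})}$, i.e.\ it is itself the exponentially small quantity in question). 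A lower bound on $\|u\|_{L^2(\Sigma_\eta)}$ says nothing, by itself, about how much of that amplitude is actually radiated rather than reflected back with an error smaller than every $e^{-\delta/\varepsilon}$; excluding that possibility is the hard quantitative content, and your sketch simply asserts it. Also, your claim that $c_1$ at the cavity end of the neck is bounded below by an $\varepsilon$-independent constant overstates what Assumption (H) gives: the correct input (from \cite{BHM}, used in the paper as (\ref{utube})) carries a factor $\varepsilon^{4.5+\delta}$, harmless for the exponent but not free.

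The paper's route is structurally different and is worth comparing with your plan. It never lower-bounds a Dirichlet-to-Neumann map or the radiated power directly at the aperture. Instead it (i) links $|\im\rho|$ to the $L^2$ mass of $u$ in an annulus \emph{far from} the obstacle, where the outgoing structure is explicit: Proposition \ref{LDisc} is proved by expanding in Hankel modes and performing a mode-dependent complex deformation so that each angular mode contributes a nonnegative amount to $-\im\rho$ up to controllable errors; (ii) argues by contradiction: if $|\im\rho|=\O(e^{-(\pi L+\delta_0)/\varepsilon})$, then $u$ is that small in the far annulus, and Carleman estimates (interior and up to $\partial\mathcal B$, away from $M_0$) are used only in this upper-bound direction, to propagate the smallness to the whole exterior near $\mathcal B$ except an arbitrarily small neighborhood of the aperture; (iii) performs a two-sided mode matching at $x=L_\varepsilon$ between the neck expansion ($A_{k,\pm}$) and the exterior expansion ($b_j$), where the convexity/symmetry hypothesis in (\ref{hypgeom}) enters, and where a sharp numerical inequality ($\sum_{k\geq3}k^{-3}+\Gamma_2^2<4$) forces $|A_{1,-}|\leq Ce^{-(\pi L+\delta)/2\varepsilon}$ (estimate (\ref{atop})); (iv) contradicts this with the cavity-side lower bound (\ref{utube}). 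Your proposal is missing the analogue of steps (i) and (iii): some mechanism, with explicit constants, that prevents near-total reflection at a general convex exterior boundary. Until that is supplied, the argument does not close.
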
  
\begin{remark}\sl
We extend this result to the higher dimensional case in Section \ref{dim3}. 
 \end{remark}
 \begin{remark}\sl
Gathering (\ref{upperbound}) and Theorem \ref{mainth}, we can reformulate the result as :
\be
\lim_{\varepsilon\to 0_+} \varepsilon \ln |\im \rho(\varepsilon) | = -\pi L.
\ee
 \end{remark}
 \section{Properties of the resonant state}

By definition, the resonance $\rho (\varepsilon)$ is an eigenvalue of
the complex
 distorted operator,
$$
P_\varepsilon (\mu):= U_\mu P_\varepsilon U_\mu^{-1},
$$
where $\mu>0$ is a small parameter, and $U_\mu$ is a complex
distortion of the
 form,
$$
U_\mu \varphi (x,y):= \varphi ((x,y)+i\mu f(x,y)),
$$
with $f\in C^\infty (\R^2; \R^2)$, $f= 0$ near $\overline{\mathcal B}$, $f(x,y) =(x,y)$ for $|(x,y)|
$ large enough. 
(Observe that by Weyl Perturbation Theorem, the essential spectrum of
$P_\varepsilon (\mu)$ 
is  $e^{-2i\alpha}\R_+$,
 with $\alpha =\arctan\mu$.) 

It is well known that such eigenvalues do not depend on $\mu$ (see,
e.g., \cite{SZ, HeM}), and that the corresponding eigenfunctions are
of the form $U_\mu u_\varepsilon$
 with $u_\varepsilon$ independent of $\mu$, smooth on $\R^2$ and
 analytic in a complex sector around ${\mathbf E}$. In other words, $u_\varepsilon$ is a non trivial analytic solution of the equation $-\Delta u_\varepsilon = \rho (\varepsilon) u_\varepsilon$ in $\Omega(\varepsilon)$, such that $u_\varepsilon\left\vert_{\partial\Omega(\varepsilon)}\right. =0$ and, for all $\mu >0$ small enough,  $U_\mu u_\varepsilon$ is well defined and is in $L^2(\Omega(\varepsilon))$ (in our context, this latter property will be taken as a definition of the fact that $u_\varepsilon$ is {\it outgoing}). Moreover,
 $u_\varepsilon$ can be 
normalized by setting, for some fixed $\mu>0$,
$$
\Vert U_\mu u_\varepsilon\Vert_{L^2(\Omega(\varepsilon ))} =1.
$$

In that case, we learn from \cite{HM} (in particular Proposition 3.1
and formula
 (5.13)), that, for any $\delta >0$, and for any $R>0$ large enough, one has,
\be\label{ugrand}
\|u_\varepsilon\|_{L^2(\Omega(\varepsilon)\cap\{ |(x,y)|<R\})}\geq 1- \O(e^{(\delta-\frac{\pi  L}2)/\varepsilon}),
\ee and 
\be
\label{upetit}
\|u_\varepsilon\|_{H^1( {\mathbf E}\cap\{ |(x,y)|<R\})}=  \O(e^{(\delta-\frac{\pi  L}2))/\varepsilon}).
\ee

Now, we take $R>0$ such that $\overline{\mathcal B} \subset \{ |(x,y)| <R\}$. Using the equation $-\Delta u_\varepsilon = \rho u_\varepsilon$
and Green's
 formula on the domain $\Omega(\varepsilon)\cap\{ |(x,y)|<R\}$, and using polar coordinates $(r,\theta)$, we obtain,
$$
\im\rho \int_{\Omega(\varepsilon)\cap\{ |(x,y)|<R\}  } |u_\varepsilon|^2dxdy = -\im \int_0^{2\pi} \frac {\partial u_\varepsilon}{\partial r}(R,\theta)\overline u_\varepsilon (R,\theta)Rd\theta,
$$
and thus, by (\ref{ugrand})-(\ref{upetit}), and for some $\delta_0 >0$,
\be
 \label{green}
\im\rho =-(1+\O(e^{(\delta-\pi L)/\varepsilon}))\,\,\im \int_0^{2\pi} \frac {\partial u_\varepsilon}{\partial r}(R,\theta)\overline u_\varepsilon (R,\theta)Rd\theta
\ee
where the $\O$ is locally uniform with respect to $R$.

Therefore, to prove our result, it is sufficient to obtain a lower
bound on 
$\im \int_0^{2\pi} \frac {\partial u_\varepsilon}{\partial r}(R,\theta)\overline u_\varepsilon (R,\theta)Rd\theta$. 
Note that, by using (\ref{upetit}), we immediately obtain (\ref{upperbound}).
 
\section{Estimate outside a large disc}\label{largedisc}

The goal of this section is to prove,
\begin{proposition}\sl
\label{LDisc} \sl Let $R_1>R_0>0$ be fixed in such a way that  $\overline{\mathcal B}\subset \{ |(x,y)| < R_0\}$. Then, for any $C>0$, there exists a constant $C'=C'(R_0, R_1,C)>0$ such that,  for all $\varepsilon >0$ small enough, one has,
$$
|\im\rho| \geq \frac1{C'}\Vert u_\varepsilon\Vert^2_{L^2(R_0< |(x,y)| < R_1)} - C'e^{-C/\varepsilon}.
$$
\end{proposition}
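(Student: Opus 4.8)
The plan is to exploit formula \eqref{green}, which expresses $\im\rho$ (up to a factor $1+\Og(e^{(\delta-\pi L)/\varepsilon})$) as $-\im\int_0^{2\pi}\partial_r u_\varepsilon(R,\theta)\overline u_\varepsilon(R,\theta)R\,d\theta$ for any fixed $R>R_0$, together with the fact that $u_\varepsilon$ is an outgoing solution of $-\Delta u_\varepsilon=\rho u_\varepsilon$ in the exterior $\{|(x,y)|>R_0\}$, where the domain is just the complement of a compact set (since $\overline{\mathcal B}\subset\{|(x,y)|<R_0\}$). First I would expand $u_\varepsilon$ outside $\{|(x,y)|\le R_0\}$ in the basis of angular harmonics $e^{ik\theta}$, $k\in\Z$: writing $u_\varepsilon(r,\theta)=\sum_k a_k(r)e^{ik\theta}$, each coefficient $a_k$ solves the Bessel equation associated with $-\Delta - \rho$, and the outgoing condition forces $a_k(r)=c_k H_k^{(1)}(\sqrt\rho\, r)$ (the branch of $\sqrt\rho$ with positive real part and small negative imaginary part, consistent with the complex distortion $U_\mu$). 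The key point is that this exterior representation is rigid: all the information is carried by the countably many constants $c_k$.

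Next I would compute the boundary term on the circle $r=R$. Using orthogonality of the $e^{ik\theta}$ and the Wronskian-type identity for Hankel functions, $\im\int_0^{2\pi}\partial_r u_\varepsilon \overline u_\varepsilon\, R\,d\theta = \sum_k |c_k|^2\, R\,\im\!\big(\sqrt\rho\, H_k^{(1)\prime}(\sqrt\rho R)\overline{H_k^{(1)}(\sqrt\rho R)}\big)$. Since $\rho\to\lambda_0>0$ and $\im\rho$ is exponentially small, $\sqrt\rho$ is close to the positive real number $\sqrt{\lambda_0}$, and for real argument $z$ one has the classical identity $\im\!\big(H_k^{(1)\prime}(z)\overline{H_k^{(1)}(z)}\big)= \frac{2}{\pi z}>0$; perturbing in the exponentially small imaginary part of $\rho$ keeps this quantity bounded below by a positive constant (uniformly in $k$, because $z H_k^{(1)\prime}(z)\overline{H_k^{(1)}(z)}$ has the stated real-line behaviour for all $k$). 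Hence
$$
\Big|\im\int_0^{2\pi}\partial_r u_\varepsilon(R,\theta)\overline u_\varepsilon(R,\theta)R\,d\theta\Big|\ \geq\ \frac1{C_1}\sum_k |c_k|^2,
$$
and combining with \eqref{green} gives $|\im\rho|\ge \frac1{C_2}\sum_k|c_k|^2$ for small $\varepsilon$.

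It remains to bound $\sum_k|c_k|^2$ from below by a multiple of $\|u_\varepsilon\|_{L^2(R_0<|(x,y)|<R_1)}^2$ minus an exponentially small error. By Parseval on each circle, $\|u_\varepsilon\|_{L^2(R_0<r<R_1)}^2 = \sum_k |c_k|^2 \int_{R_0}^{R_1} |H_k^{(1)}(\sqrt\rho\, r)|^2\, r\, dr$, so it suffices to show $\int_{R_0}^{R_1}|H_k^{(1)}(\sqrt\rho\, r)|^2 r\,dr \le C_3$ uniformly in $k$, possibly at the cost of discarding the high-frequency tail. For bounded $k$ this is clear. For large $k$, however, $H_k^{(1)}(z)$ grows like $(2k/ez)^k$ when $k\gg z$, and $|H_k^{(1)}(\sqrt\rho R_1)|/|H_k^{(1)}(\sqrt\rho R_0)|\sim (R_0/R_1)^k$ is exponentially small in $k$; this is exactly what makes the estimate on the outer circle $r=R$ (with $R$ near $R_1$) control the $L^2$ norm on the annulus — one splits the sum at, say, $k\le N/\varepsilon$ and $k>N/\varepsilon$ for a large constant $N$. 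For the low modes one uses that $u_\varepsilon|_{\partial\Omega(\varepsilon)}=0$ together with the uniform upper bound on $\int|H_k^{(1)}|^2 r\,dr$; for the high modes one uses the exponential decay in $k$ of the ratios of Hankel norms to show that this part of the annulus norm is already $\Og(e^{-C/\varepsilon})\sum_k|c_k|^2$, which is absorbed. I expect the main obstacle to be making the Hankel-function asymptotics uniform in $k$ and in the exponentially small perturbation $\rho-\lambda_0$ simultaneously — in particular controlling $|H_k^{(1)}(\sqrt\rho r)|$ on the full range $r\in[R_0,R_1]$ for all $k$ — which is the technical heart of the argument; once that is in hand, the conclusion follows by choosing $N$ large enough that the discarded tail is $\le \frac12$ of the total and $C'$ to absorb the finitely many remaining constants. $\square$
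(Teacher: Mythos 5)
Your overall architecture (angular Fourier expansion, outgoing Hankel representation $u_k=c_kH_k(\sqrt\rho\,r)$, flux identity on $r=R$, then Parseval plus Hankel-ratio estimates to recover the annulus norm) is the same as the paper's, but the central step has a genuine gap: the claim that $\im\bigl(\sqrt\rho\,H_k'(\sqrt\rho R)\overline{H_k(\sqrt\rho R)}\bigr)$ stays bounded below by a positive constant \emph{uniformly in $k$} just because $\im\rho$ is exponentially small. The real-argument identity $\im\bigl(H_k'(z)\overline{H_k(z)}\bigr)=\tfrac{2}{\pi z}$ rests on an exact cancellation of the huge terms $Y_kY_k'$, and when the argument is moved off the real axis by $\delta z\sim R\,\im\sqrt\rho$ the resulting perturbation of this quantity is of size roughly $|\im\rho|\cdot\frac{k}{R}\,|H_k(R\sqrt{\lambda_0})|^2\sim |\im\rho|\,\bigl(2k/(eR\sqrt{\lambda_0})\bigr)^{2k}$, which grows super-exponentially in $k$. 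An exponentially small bound $|\im\rho|\leq Ce^{-c/\varepsilon}$ therefore cannot control it once $|k|$ is of order $c'/\varepsilon$ or larger, so the perturbed Wronskian quantity may well lose its sign there; your parenthetical justification (``has the stated real-line behaviour for all $k$'') only restates the real-axis identity and does not give the needed uniformity. Since your lower bound $\bigl|\im\int\partial_ru\,\overline u\,R\,d\theta\bigr|\geq \frac1{C_1}\sum_k|c_k|^2$ requires every term of the sum to be nonnegative and uniformly bounded below, this step, as written, fails; your later splitting at $k\leq N/\varepsilon$ concerns only the Parseval/annulus part, not the flux positivity.

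This is exactly the difficulty the paper's proof is built to avoid. It splits the flux sum at $|k|\leq C/\varepsilon$: the high modes are not treated through $\beta_k=\im Rh_k'\overline{h_k}$ at all, but through $\alpha_k=\im Ru_k'\overline{u_k}$, shown to be $\O(e^{-\delta|k|})$ by viewing the radial ODE as semiclassically elliptic in $h=1/|k|$ and using Agmon estimates together with the local boundedness of $u_\varepsilon$ (Lemma \ref{estimatlambda+}). For the low modes it does \emph{not} prove $\beta_k(R)\geq c>0$; instead, via a $k$-dependent radial complex deformation $\nu_k(r)=re^{i\mu_k(r)}$ and a WKB computation of $\int_{r_k+1}^\infty r|g_k|^2dr$, it proves $\beta_k(R)\geq c-|\im\rho|\int_R^{r_k+1}r|g_k|^2dr$, where the second term is huge pointwise but, after multiplication by $|a_k|^2$, is controlled by outgoing-resolvent bounds on $u_\varepsilon$ itself and yields an error $C'|\im\rho|e^{-\delta_1'/\varepsilon}$ that can be absorbed because it is proportional to $|\im\rho|$ (Lemma \ref{lemmeImrho}). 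To repair your argument you would need to import both of these devices; the second half of your proposal (Hankel ratio bounds to pass from $\sum_{|k|\leq K}|c_k|^2$ to the annulus norm) is sound and close to the paper's use of Lemma \ref{lemmeHankel}.
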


\begin{proof} Working in polar coordinates $(r,\theta)$, for $r\geq R_0$ we can represent $u=u_\varepsilon$ as,
$$
u(r,\theta)=\frac1{2\pi}\sum_{k\in\Z}u_k(r)e^{ik\theta},
$$
where $u_k(r):=\int_0^{2\pi} u(r,\theta) e^{-ik\theta}d\theta =a_kH_k(r\sqrt\rho )$, $H_k$ being the outgoing Hankel function, defined for $k\geq 0$ as
$$
H_k(t):=\frac{e^{i(t-\frac{k\pi}2-\frac{\pi}4)}}{\Gamma(k+\frac12)}\sqrt{\frac2{\pi t}}\int_0^\infty e^{-s}s^{k-\frac12}\left( 1+\frac{is}{2t}\right)^{k-\frac12}ds,
$$
 for $k<0$ by $H_k=(-1)^kH_{-k}$, and solution to,
$$
t^2H_k''(t)+tH_k'(t)+(t^2-k^2)H_k(t) =0.
$$
In particular, for all $k$, the function $h_k:=H_k(r\sqrt\rho )$ is an analytic function, solution to
\be
\label{equk}
-h_k'' -\frac1r h_k' +\frac{k^2}{r^2}h_k =\rho h_k,
\ee
and for any $\mu >0$ fixed small enough, one has,
\be
\label{uksort}
h_k(re^{i\mu})\in H^2 ([R_0, +\infty)).
\ee
By (\ref{green}), for any $R\in[R_0,R_1]$ we also have,
\be
\label{imrhoalpha}
\im\rho =-(1+\O(e^{(\delta-\pi L)/\varepsilon})) \sum_{k\in\Z}\alpha_k(R)=-(1+\O(e^{(\delta-\pi L)/\varepsilon})) \sum_{k\in\Z}\beta_k(R)|a_k|^2,
\ee
with
\be
\label{alphabeta}
\alpha_k(R):=\im R u_k'(R)\overline u_k(R)\quad ;\quad \beta_k(R):=\im R h_k'(R)\overline h_k(R).
\ee
We set,
$$
\lambda (R):= \sum_{k\in\Z}\alpha_k(R)=\sum_{k\in\Z}\beta_k(R)|a_k|^2,
$$
and, for $C>0$ arbitrary large, we write,
$$
\lambda (R)=\sum_{|k|\leq C/\varepsilon}\alpha_k(R)+\sum_{|k|> C/\varepsilon}\alpha_k(R)=:\lambda_-(R,C) +\lambda_+(R,C).
$$
We first prove,
\begin{lemma}\sl
\label{estimatlambda+}
There exists $\delta >0$ such that, for any $C>0$, one has,
$$
\lambda_+(R,C)=\O(e^{-\delta C/\varepsilon}),
$$
uniformly as $\varepsilon \to 0_+$.
\end{lemma}
\begin{proof} In view of (\ref{alphabeta}), it is enough to prove that $|u_k(R)|+|u_k'(R)| =\O(e^{-\delta |k|})$ for some $\delta =\delta (R)>0$, uniformly as $|k|\to\infty$. From (\ref{equk}), we know that $u_k$ is solution to,
$$
-k^{-2}u_k'' -\frac1{k^2r} u_k' +\frac1{r^2}u_k -\frac{\rho}{k^2} u_k=0,
$$
that can be considered as a semiclassical differential equation with small parameter $h:=|k|^{-1}$ and principal symbol $a(r,r^*):= (r^*)^2+r^{-2}$. In particular, this symbol is locally elliptic, and since $u$ is locally bounded together with all its derivatives, we also know that $u_k$ is locally uniformly bounded (together with all its derivatives) as $|k|\to\infty$. Then, we can apply standard techniques of semiclassical analysis (in particular Agmon estimates: see, e.g., \cite{Ma}) to prove that $|u_k|+|u'_k|$ is locally $\O(e^{-\delta |k|})$ for some $\delta >0$, and the result  follows.
\end{proof}

Next, we show,
\begin{lemma}\sl
\label{lemmeImrho} 
For any $C>0$ and any $\sigma\in(0,\pi L/2)$, there exists $C'=C'(C,\delta_1) >0$  such that
$$
\lambda_-(R,C) \geq \frac{1}{C'}\sum_{|k|\leq C/\varepsilon}|a_k|^2- C'|\im\rho|e^{-2\sigma/\varepsilon},
$$
uniformly as $\varepsilon \to 0_+$.
\end{lemma}
\begin{proof}
For $|k|\leq C/\varepsilon$,
let $\mu_k=\mu_{k,R}\in C^\infty (\R_+;\R_+)$ be a real non-decreasing function verifying,
$$
\mu_k(r)=0\quad {\rm for}\quad r\leq r_k:=\max(C_0|k|,R)\quad ;\quad \mu_k(r) =\frac{\mu_0}{1+|k|} \quad {\rm for}\quad r\geq r_k+1,
$$
where $\mu_0>0$ is fixed small enough, and $C_0>0$ will be chosen sufficiently large later on. We set,
\be
\label{distort}
\nu_k(r):=re^{i\mu_k(r)}\quad ; \quad g_k(r)=U_kh_k(r):=h_k(\nu_k(r)).
\ee
By (\ref{uksort}) we have,
\be
\label{vkH2}
g_k \in H^2([R_0, +\infty)).
\ee
Moreover, by construction we also have,
$$
\beta_k(R) =\im  \frac{\nu_k(R)}{\nu_k'(R)}g_k'(R)\overline g_k(R),
$$
and, by using (\ref{equk}), we see that $g_k$ is solution to,
\be
\label{eqvk}
-g_k''-\left(\frac{\nu_k'}{\nu_k}-\frac{\nu_k''}{\nu_k'}\right)g_k' + \frac{k^2(\nu_k')^2}{\nu_k^2}g_k = \rho (\nu_k')^2g_k.
\ee
Then, using (\ref{vkH2})-(\ref{eqvk}),we can write,
\begin{align*}
\beta_k(R)&= - \im\int_R^\infty \frac{d}{dr}\left(\frac{\nu_k(r)}{\nu_k'(r)}g_k'(r)\overline g_k(r)\right)dr\\
&= -\im\int_R^\infty \left[ \left(1-\frac{\nu_k\nu_k''}{(\nu_k')^2}\right)g_k'\overline g_k + \frac{\nu_k(r)}{\nu_k'(r)}g_k''(r)\overline g_k(r)+\frac{\nu_k(r)}{\nu_k'(r)}|g_k'(r)|^2\right]dr\\
&= -\im\int_R^\infty \left[ \left(\frac{k^2\nu_k'}{\nu_k}-\rho \nu_k\nu_k'\right) |g_k|^2+\frac{\nu_k(r)}{\nu_k'(r)}|g_k'(r)|^2\right]dr.
\end{align*}
Since $\nu_k'/\nu_k = r^{-1}+i\mu_k'$ and $\nu_k\nu_k' = r(1+ir\mu_k')e^{2i\mu_k}$, we obtain,
$$
\beta_k(R)=\int_R^\infty \left(\gamma_k(r)|g_k'(r)|^2+\delta_k(r)|g_k(r)|^2\right)dr,
$$
with,
\begin{align*}
\gamma_k(r):= &\frac{\mu_k'}{r^{-2}+(\mu_k')^2}\, ;\\
\delta_k(r):= &r\re\rho \sin2\mu_k+r\im\rho \cos2\mu_k\\
& +r^2\mu_k'[(\re\rho)\cos 2\mu_k-(\im\rho)\sin 2\mu_k]-k^2\mu_k'.
\end{align*}
In particular, $\gamma_k\geq 0$. Since $\mu_k\leq \mu_0(1+|k|)^{-1}$, $\im\rho\leq0$, and $\re\rho\to\lambda_0>0$ as $\varepsilon\to 0$, we also have,
$$
\delta_k \geq \delta_0r \sin2\mu_k+r\im\rho \cos2\mu_k+\mu_k'(\delta_0r^2-k^2),
$$
where $\delta_0$ is any positive constant such that $\delta_0<\lambda_0\cos 2\mu_0$. But, by construction, we have $\mu_k'(r)=0$ when $r\leq C_0|k|$. Therefore $\mu_k'(r)(\delta_0r^2-k^2)\geq \mu_k'(r)(\delta_0C_0^2-1)k^2\geq 0$ if we choose $C_0\geq \delta_0^{-1/2}$. Then, we obtain,
\be
\label{estalphak}
\begin{split}
\beta_k(R)\geq &\int_R^\infty r\left(\delta_0(\sin 2\mu_k(r)+\im\rho\cos2\mu_k(r)\right)|g_k(r)|^2dr\\
\geq &\delta_0\sin(\frac{\mu_0}{1+|k|})\int_{r_k+1}^\infty r|g_k(r)|^2dr - |\im\rho|\int_R^\infty r|g_k(r)|^2dr.
\end{split}
\ee
Since $|k|\leq C/\varepsilon$ and $|\im\rho| =\O(e^{-c_1/\varepsilon})$ for some $c_1>0$, we also have $|\im\rho|\leq \frac12\delta_0\sin(\frac{\mu_0}{1+|k|})$ for $\varepsilon>0$ small enough, and therefore,
$$
\beta_k(R)\geq \frac12\delta_0\sin(\frac{\mu_0}{1+|k|})\int_{r_k+1}^\infty r|g_k(r)|^2dr - |\im\rho|\int_R^{r_k+1} r|g_k(r)|^2dr.
$$
Equivalently, setting $v_k(r):= u_k(\nu_k(r))=a_kg_k(r)$, we have proved,
\be
\label{estalphak}
\alpha_k(R) \geq \frac12\delta_0|a_k|^2\sin(\frac{\mu_0}{1+|k|})\int_{r_k+1}^\infty r|g_k(r)|^2dr - |\im\rho|\int_R^{r_k+1} r|v_k(r)|^2dr
\ee
Now, considering a cut-off function $\chi =\chi (r)\in C^\infty (\R_+;[0,1])$ such that $\chi =1$ on $r\geq R_0$, $\chi =0$ on $r\leq R_0-\delta_0$ ($\delta_0 >0$ small enough), we see that the function $w:=\chi u$ satisfies $(-\Delta -\rho)w = [-\Delta,\chi]u$ on all of $\R^2$, and is outgoing. Then, standard estimates on the outgoing resolvent of the Laplacian (or, equivalently, on the Green function of the Helmholtz equation in $\R^n$, $n\geq 2$) show that, for all $\delta >0$ arbitrarily small, one has $w=\O(e^{\delta r}||[-\Delta,\chi]u||_{L^2})$ uniformly as $r\to\infty$. Actually, such estimates remain valid for the complex distorted Laplacian $U_0\Delta U_0^{-1}$ (where $U_0$ is as in (\ref{distort}) with some arbitrary $\mu_0\geq 0$ small enough), and since $||[-\Delta,\chi]u||_{L^2}=\O(e^{-\delta_1/\varepsilon})$ for any $\delta_1 \in (0,\pi L/2)$, we obtain: $u(r)=\O(e^{\delta r -\delta_1/\varepsilon})$ uniformly on $\{ r\in\C\, ; \re r\geq R_0\, ,\, |\im r|\leq \mu_0(\re R-R_0)\}$, where $\delta >0$ is arbitrary. In particular, this gives us: $r|v_k(r)|^2 =\O(e^{\delta r - 2\delta_1/\varepsilon})$, and therefore,
$$
\sum_{|k|\leq C/\varepsilon}\int_R^{r_k+1} r|v_k(r)|^2dr =\O\left(\frac{C}\varepsilon e^{\delta C/\varepsilon-2\delta_1/\varepsilon}\right)=\O(e^{-2\delta_1'/\varepsilon}),
$$
where $\delta'_1=\delta_1-\delta C$ can be taken arbitrarily close to $\delta_1$ (and thus, to $\pi L/2$)  by chosing $\delta <<1/C$. Inserting into (\ref{estalphak}) and taking the sum over $k$, we obtain,
\be
\label{estlambda-}
\lambda_-(R,C)\geq \frac12\delta_0\sum_{|k|\leq C/\varepsilon}|a_k|^2\sin(\frac{\mu_0}{1+|k|})\int_{r_k+1}^\infty r|g_k(r)|^2dr-C'|\im\rho |e^{-\delta_1'/\varepsilon}
\ee
with $C'=C'(C)>0$.

In order to complete the proof, we need to estimate the quantity $J_k:=\int_{r_k+1}^\infty r|g_k(r)|^2dr$ as $|k|\to \infty$. Setting $r=|k|s$, for $|k|$ large enough we find,
\be
\label{Jk}
J_k\geq|k|^2\int_{2C_0}^\infty |w_k(se^{i\mu_0/(1+|k|)})|^2ds
\ee
where $w_k(z):=z^{1/2}h_k(|k|z)$ ($z\in\C$, $|z|\geq C_0$, $|\arg z|\leq \mu_0$). Using (\ref{equk}), we see that $w_k$ is solution to,
$$
-\frac1{k^2}w_k'' +\left(\frac1{z^2}-\frac1{4k^2z^2}-\rho\right)w_k =0.
$$
This is a semiclassical Schr\"odinger equation, with small parameter $h:=|k|^{-1}$, and we can apply to it the standard WKB complex method in order to find the asymptotic of $w_k$, both as $k\to \infty$ and $\re z\to +\infty$. Using also that $w_k$ must be outgoing, we immediately obtain,
\be
\label{bkw}
w_k(z)\sim \frac{\tau_k}{(\rho-z^{-2})^{\frac14}}\exp\left(i|k|\int_{2C_0}^z (\rho-t^{-2})^{\frac12}dt\right)
\ee
as $|k|+\re z\to \infty$, uniformly with respect to $\varepsilon >0$. Here $\tau_k\in\C$ is a complex constant of normalization that we have to compute. In order to do so, we use the well-known asymptotic of $H_k(t)$ as $\re t\to +\infty$,
$$
H_k(t)\sim \sqrt{\frac2{\pi t}}\exp\left(i(t-\frac{k\pi}2-\frac{\pi}4)\right),
$$
that gives,
$$
w_k(r)=r^{\frac12}H_k(|k|r\sqrt\rho)\sim \sqrt{\frac2{\pi |k|}}\exp\left(i(|k|r\sqrt\rho-\frac{k\pi}2-\frac{\pi}4)\right)\quad (r\to +\infty).
$$
Comparing with (\ref{bkw}), we obtain,
$$
\tau_k=\rho^{\frac14} \sqrt{\frac2{\pi |k|}}e^{-i(\frac{k\pi}2+\frac{\pi}4)}e^{i|k| L}
$$
where 
$$
L:=\lim_{r\to+\infty}(r\sqrt\rho -\int_{2C_0}^r(\rho -t^{-2})^{\frac12}dt)=\lim_{r\to+\infty}(r\sqrt\rho -\left[\sqrt{\rho t^2 -1} -\tan^{-1}\sqrt{\rho t^2 -1}\right]_{2C_0}^r)
$$
that is,
$$
L=\frac{\pi}2+\sqrt{4\rho C_0^2-1}-\tan^{-1}\sqrt{4\rho C_0^2-1}.
$$
In particular, 
$$
\im L=\im \sqrt{4\rho C_0^2-1}+\frac1{2}\int_{\im \sqrt{4\rho C_0^2-1}}^{-\im \sqrt{4\rho C_0^2-1}}\frac{1}{1+(\re\sqrt{4\rho C_0^2-1}+it)^2}dt,
$$
and thus
$$
\im L =(1+\O(C_0^{-1}))\im \sqrt{4\rho C_0^2-1}\leq 0
$$
if $C_0$ has been taken sufficiently large. As a consequence,
$$
|\tau_k|\geq |\rho|^{\frac14} \sqrt{\frac2{\pi |k|}},
$$
and then, by (\ref{bkw}), and for $s\geq 2C_0$, we deduce,
$$
|k|^2|w_k(se^{i\mu_0/(1+|k|)})|^2\geq \delta_2 |k|e^{-\delta s},
$$
where $\delta_2 >0$ is a constant (independent both of $k$ and $\varepsilon$). Going back to (\ref{Jk}), for $|k|$ large enough we finally obtain,
$$
J_k \geq \frac{|k|}{C_1},
$$
where $C_1$ is a positive constant. Then, inserting into (\ref{estlambda-}), we obtain
$$
\lambda_-(R,C)\geq \frac{\delta_0}{3C_1}\sum_{|k|\leq C/\varepsilon}|a_k|^2-C'|\im\rho |e^{-\delta_1'/\varepsilon},
$$
and Lemma \ref{lemmeImrho} follows.
\end{proof}
Now, for any $K\geq 0$, we have,
$$
||u||^2_{r=R}= R\sum_{k\in\Z}|a_k|^2 |h_k(R)|^2 \leq C_K\sum_{|k|\leq K}|a_k|^2+  R\sum_{|k| >K}|a_k|^2 |h_k(R)|^2,
$$
with $C_K:=\sup_{|k|\leq K\,; \, R\in[R_0,R_1]}R|h_k(R)|^2$. Then, in the same spirit as in \cite{Bu1}, we use an estimate  on the outgoing Hankel functions that will permit us to compare its values at two different points. 
\begin{lemma}\sl 
\label{lemmeHankel}
One has,
$$
h_k(R) = -i\sqrt{\frac{2}{\pi}}\, k^{k-\frac12}\left(\frac2{eR\sqrt{\rho}}\right)^k \left( 1+\O(k^{-1})\right),
$$
uniformly with respect to $R\in[R_0,R_1]$, $\varepsilon>0$ small enough, and $k\geq 1$ large enough.
\end{lemma}
\begin{proof} See Appendix.
\end{proof}

 It follows from this lemma that, for any $R\in [R_0,R_1]$, we have,
$$
\frac{|h_k(R)|} {|h_k(R_0)|}={\mathcal O}\left( (R_0/R)^{|k|}\right)
$$
uniformly as $|k|\to \infty$. Therefore, we obtain,
\be
||u||^2_{r=R} \leq C_K\sum_{|k|\leq K}|a_k|^2+CR\sum_{|k| >K}|a_k|^2 |h_k(R_0)|^2R_0^{2|k|}R^{-2|k|}
\ee
where $C>0$ does not depend on $K,R$. Integrating with respect to $R$ on the interval $[R_0,R_1]$, we obtain,
$$
||u||^2_{R_0\leq R\leq R_1}\leq C_K'\sum_{|k|\leq K}|a_k|^2+C\sum_{|k| >K}|a_k|^2 |h_k(R_0)|^2R_0^{2|k|}\frac{R_0^{2-2|k|}}{2|k|-2},
$$
and thus,
\be
\label{uRuR'}
||u||^2_{R_0\leq R\leq R_1}\leq C_K'\sum_{|k|\leq K}|a_k|^2+\frac{CR_0}{2K-2} ||u||^2_{r=R_0}.
\ee
Moreover, for all $S\in[R_0,R_1]$, we have,
$$
||u||^2_{r=R_0}=||u||^2_{r=S}-\int_{S}^{R_0}(||u(r)||^2_{L^2(0,2\pi )}+2r\re\la \partial_r u,  u\ra_{L^2(0,2\pi )}) dr,
$$
that gives,
$$
||u||^2_{r=R_0}= ||u||^2_{r=S}+{\mathcal O}(|| \partial_r u||^2_{R_0\leq r\leq R_1}+||u||^2_{R_0\leq r\leq R_1}),
$$
and thus, using the equation $-\Delta u=\rho u$ and standard Sobolev estimates,
$$
||u||^2_{r=R_0}= ||u||^2_{r=S}+{\mathcal O}(||u||^2_{R_0\leq r\leq R_1}).
$$
Inserting this into (\ref{uRuR'}), and taking $K$ sufficiently large, we obtain,
\be
\label{uRuR0R1}
||u||^2_{R_0\leq R\leq R_1} \leq C'_K\sum_{|k|\leq K}|a_k|^2+\frac{C'}{K-1}||u||^2_{r=S},
\ee
where $C', C'_K>0$ are constants, and $C'$ is independent of $K$. Finally, integrating in $S$ on $[R_0,R_1]$, and increasing again the value of $K$, we arrive to,
\be
\label{uR0R1ak}
||u||^2_{R_0\leq r\leq R_1} \leq 2C'_K\sum_{|k|\leq K}|a_k|^2.
\ee
Then, Proposition \ref{LDisc} directly follows from (\ref{imrhoalpha}), Lemma \ref{estimatlambda+}, Lemma \ref{lemmeImrho}, and (\ref{uR0R1ak}).
 \end{proof}
 
 \begin{remark}\sl By integrating with respect to $R$ on any bounded interval of $[R_0, +\infty)$, and by using the equation $-\Delta u_\varepsilon = \rho u_\varepsilon$ and standard estimates on the Laplacian, we easily deduce from this proposition that, for any bounded open set $V\subset \{ |(x,y)|\geq R_0\}$ and any $s\geq 0$, one has $\Vert u_\varepsilon\Vert^2_{H^s(V)}=\O(|\im\rho|+e^{-C/\varepsilon})$ for any $C>0$.
 \end{remark}
 
  \begin{remark}\sl The result of Proposition \ref{LDisc} can easily be generalized to any dimension $n\geq 2$ by working with the complex measure $(\nu_k(r)/\nu'_k(r))^{n-1}dr$ instead of  $(\nu_k(r)/\nu'_k(r))dr$ in the proof of Lemma  \ref{lemmeImrho}.\end{remark}
  
\begin{remark}\sl As pointed out to us by J. Sj\"ostrand, an alternative (and probably more conceptual) proof of Proposition \ref{LDisc} may consists in making the change of scale $r\mapsto r/h$, where $h>0$ is an extra small parameter, and to apply the techniques of semiclassical analysis as $h\to 0_+$. The fact that $u$ is outgoing means that it lives around the outgoing trajectories starting from the obstacle, and thus in a microlocal weighted space where $-h^2\Delta -\rho$ can be written as the product of an elliptic pseudodifferential operator with $\partial_r -iA$, where the selfadjoint operator $A$ acts on the tangent variable $\theta$ only, and is positive. Such arguments are developed in \cite{Sj}, Section 4.
\end{remark}

\section{Estimate near the obstacle}\label{eastob}
Now, reasoning by contradiction, assume the existence of $\delta_0 >0$ such that, along a sequence $\varepsilon \to 0^+$, one has
\be
\label{absurd}
|\im\rho| =\O(e^{-(\pi  L+\delta_0)/\varepsilon}).
\ee 
In the rest of the proof, it will always been assumed that $\varepsilon$ tends to zero along this sequence. Then Proposition \ref{LDisc} (added to standard Sobolev estimates) tells us that for any $R_1>R_0>0$ such that $\overline{\mathcal B}\subset \{ |(x,y)| < R_0\}$, we have,
\be
\label{decu}
\Vert u_\varepsilon\Vert^2_{H^1(R_0<|(x,y)| <R_1)} = \O(e^{-(\pi  L+\delta_0)/\varepsilon}).
\ee
To propagate this estimate up to an arbitrarily small neighborhood of $\overline{\mathcal B}$, we use the Carleman estimate 
in \cite[Theorem 3.5]{LL}.

First fix a point $(x_0,y_0)$ in ${\mathbf E}=\R^2\backslash \overline{\mathcal B}$, and assume there exists a real function $f$ defined on a small  open neighborhood $V_0$ of $(x_0,y_0)$ in ${\mathbf E}$, with $f(x_0,y_0)=0$, $\nabla f(x_0,y_0)\not= 0$, and such that for any $\delta >0$ small enough, there exists $\delta' =\delta'(\delta)>0$, such that,
\be
\label{estaprio}
\Vert u_\varepsilon\Vert^2_{H^1(V\cap \{f\geq\delta\})}=\O (e^{-(\pi  L+\delta')/\varepsilon}),
\ee
uniformly as $\varepsilon \to 0_+$. (For instance, in view of (\ref{decu}), $(x_0,y_0)$ could be any point of ${\mathbf E}$ such that $|(x_0,y_0)|=R_-$, with $R_-:=\inf\{ R>0\,; \, \overline{\mathcal B}\subset \{ |(x,y)| \leq R\}$, and $f(x,y) = x^2 +y^2 - R_-^2$.)

For $\lambda >0$ fixed large enough and $(x,y)$ in $V_0$, following \cite{LL, LR} we consider the function,
$$
\varphi(x,y):= e^{\lambda (f(x,y) - (x-x_0)^2 -(y-y_0)^2)}.
$$
Then, setting,
$$
p_\varphi (x,y,\xi,\eta):= \xi^2+\eta^2 - |\nabla\varphi (x,y)|^2+2i\la \nabla\varphi (x,y), (\xi,\eta)\ra = q_1+iq_2,
$$
it is easy to check that, if $\lambda$ has been taken large enough, then there exists a constant $C_0>0$ such that one has the implication,
$$
p_\varphi (x,y,\xi,\eta) =0 \, \Rightarrow \, \{ q_1, q_2\} (x,y,\xi,\eta) \geq \frac1{C_0},
$$
where $\{ q_1, q_2\}$ is the Poisson bracket of the real-valued functions $q_1$ and $q_2$. Moreover, possibly by shrinking $V_0$ around $(x_0,y_0)$, we see that $\nabla\varphi \not =0$ on $V$. In particular, Assumption 3.1 of \cite{LL} is satisfied, and if
 $\chi\in C_0^\infty (V_0\, ;\, [0,1])$ is such that $\chi =1$ near $(x_0,y_0)$, we can apply Theorem 3.5 of \cite{LL} to the function $w:=\chi u_\varepsilon$, and with small parameter $h:=\varepsilon / \mu$, where $\mu >0$ is an extra-parameter that will be fixed small enough later on. Then, for $\varepsilon/\mu$ small enough, we obtain,
\be
\label{carlem1}
\Vert e^{\mu\varphi /\varepsilon}w\Vert_{L^2}^2 + \mu^{-2}\varepsilon^2\Vert  e^{\mu\varphi /\varepsilon}\nabla w\Vert_{L^2}^2\leq C\mu^{-3}\varepsilon^3\Vert  e^{\mu\varphi /\varepsilon}\Delta w\Vert_{L^2}^2
 \ee
where $C>0$ is a constant. Then, writing $-\Delta w = \rho w -[\Delta, \chi] u_\varepsilon$, and observing that, for $\varepsilon/\mu$ small enough, the term involving $\rho w$ in the right-hand side of (\ref{carlem1}) can be absorbed by the first term of the left-hand side, we are led to,
$$
\Vert e^{\mu\varphi /\varepsilon}w\Vert_{L^2}^2 + \mu^{-2}\varepsilon^2\Vert  e^{\mu\varphi /\varepsilon}\nabla w\Vert_{L^2}^2\leq C\mu^{-3}\varepsilon^3\Vert  e^{\mu\varphi /\varepsilon}[\Delta, \chi] u_\varepsilon\Vert_{L^2}^2,
$$
with a new constant $C>0$. Now, setting $m_0:=\sup_{V_0} \varphi$, $V_0':=\{ \chi =1\}$, $S_{\delta}:= {\rm Supp}\nabla\chi \cap \{ f<\delta\}$ ($\delta >0$ small enough), and using (\ref{estaprio}), we deduce,
\be
\begin{split}
\Vert e^{\mu\varphi /\varepsilon}u_\varepsilon\Vert_{L^2(V_0')}^2 + &\mu^{-2}\varepsilon^2\Vert  e^{\mu\varphi /\varepsilon}\nabla u_\varepsilon\Vert_{L^2(V_0')}^2\\
&=\O(\mu^{-3}\varepsilon^3\Vert  e^{\mu\varphi /\varepsilon}[\Delta, \chi] u_\varepsilon\Vert_{L^2(S_\delta)}^2+ e^{(\mu m_0-\pi  L-\delta')/\varepsilon}).
\end{split}
\ee

On the other hand, we have $S_\delta\subset \{f<\delta\}\cap \{|(x,y)-(x_0,y_0)| \geq \delta_1\}$ for some $\delta_1 >0$ independent of $\delta$, and thus, by construction, for $\delta >0$ sufficiently small, there exists a constant $\delta_2>0$ such that,
\be
\label{estSdelta}
S_{\delta}\subset \{ \varphi (x,y) \leq 1-\delta_2\}. 
\ee
As a consequence, we obtain,
\be
\label{estcarlmu}
\begin{split}
\Vert e^{\mu\varphi /\varepsilon}u_\varepsilon\Vert_{L^2(V_0')}^2 + &\mu^{-2}\varepsilon^2\Vert  e^{\mu\varphi /\varepsilon}\nabla u_\varepsilon\Vert_{L^2(V_0')}^2\\
&=\O(\mu^{-3}\varepsilon^3e^{\mu(1-\delta_2)/\varepsilon}\Vert   u_\varepsilon\Vert_{H^1(S_\delta)}^2+ e^{(\mu m_0-\pi  L-\delta')/\varepsilon}).
\end{split}
\ee
Since $S_\delta\subset {\mathbf E}$, we also know (see (\ref{upetit})) that $\Vert   u_\varepsilon\Vert_{H^1(S)}$ is not exponentially larger than $e^{-\pi L/2\varepsilon}$. Moreover, since $\varphi (x_0,y_0) =1$, if $B_r$ stands for the ball of radius $r$ centered at $(x_0,y_0)$, we have $\varphi \leq 1-\theta (r)$ on $B_r$, with $\theta (r)\to 0$ as $r\to 0$. Therefore, for $r>0$ small enough, we deduce from (\ref{estcarlmu}),
\be
\label{estcarlmu2}
\begin{split}
\Vert u_\varepsilon\Vert_{L^2(B_r)}^2 + &\mu^{-2}\varepsilon^2\Vert  \nabla u_\varepsilon\Vert_{L^2(B_r)}^2\\
&=\O(\mu^{-3}\varepsilon^3e^{(\mu(\theta (r)-\frac12\delta_2)-\pi L)/\varepsilon}+ e^{(\mu (m_0-1+\theta(r))-\pi  L-\delta')/\varepsilon}).
\end{split}
\ee
Now, we first fix $\delta>0$ such that (\ref{estSdelta}) is satisfied, and then $r>0$ and $\mu>0$ sufficiently small, in such a way that $\theta (r)\leq \frac14\delta_2$ and $(\mu (m_0-1+\theta(r))\leq \frac12\delta'$. We obtain,
$$
\Vert u_\varepsilon\Vert_{L^2(B_r)}^2 + \varepsilon^2\Vert  \nabla u_\varepsilon\Vert_{L^2(B_r)}^2=\O(e^{-\pi L/\varepsilon} (e^{-\frac{\mu}4\delta_2/\varepsilon}+ e^{-\frac12\delta'/\varepsilon}))
$$
In other words, we have extended the estimate (\ref{estaprio}) across the boundary $\{f=0\}$ near $(x_0,y_0)$. Our argument can be performed near any point $(x_0,y_0)\in {\mathbf E}$ where an estimate like (\ref{estaprio}) is valid, and thus, starting form the points of the circle $\{ |(x,y)| =R_-\}$ (where the estimate is valid thanks to Proposition \ref{LDisc} and to the assumption (\ref{absurd})), and deforming continuously this circle up to make it become the boundary of $\mathcal B$, a standard covering argument leads to,
\begin{proposition}\sl
\label{nearobst}
\label{boundary}
Under assumption (\ref{absurd}), for any compact set $K\subset {\mathbf E}$, there exists $\delta =\delta(K)>0$ such that,
$$
\Vert u_\varepsilon\Vert_{H^1(K)}^2 =\O(e^{-(\pi  L+\delta)/\varepsilon}),
$$
uniformly as $\varepsilon \to 0_+$.
\end{proposition}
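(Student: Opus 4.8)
The plan is to upgrade the a priori bound (\ref{decu}), valid on the annulus $\{R_0<|(x,y)|<R_1\}$ by Proposition \ref{LDisc} together with assumption (\ref{absurd}), to an exponentially small bound with a strictly better rate on every compact subset of $\mathbf{E}$, by iterating a local Carleman estimate that lets the bound cross a hypersurface. Concretely, the building block is the following local statement: if at a point $(x_0,y_0)\in\mathbf{E}$ there is a smooth $f$ with $f(x_0,y_0)=0$, $\nabla f(x_0,y_0)\neq 0$, and (\ref{estaprio}) holds on $\{f\geq\delta\}$ for all small $\delta>0$, then $\|u_\varepsilon\|_{H^1(B_r)}^2=\O(e^{-(\pi L+\delta'')/\varepsilon})$ on a small ball $B_r$ around $(x_0,y_0)$ for some $\delta''>0$, i.e. the estimate propagates across $\{f=0\}$. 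Granting this, one starts with $f(x,y)=x^2+y^2-R_-^2$ on the circle $\{|(x,y)|=R_-\}$, where (\ref{estaprio}) holds by (\ref{decu}), and continuously deforms this circle inside $\mathbf{E}$ so that it sweeps out a neighborhood of any prescribed compact $K\subset\mathbf{E}$. At each stage the relevant level set sits inside the already-controlled region, so the block applies; covering each intermediate hypersurface by finitely many such balls and taking the smallest of the finitely many gains $\delta''$ produces $\delta=\delta(K)>0$.

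For the building block I would follow \cite{LL,LR}: introduce the convexified weight $\varphi=e^{\lambda(f-(x-x_0)^2-(y-y_0)^2)}$ with $\lambda$ large, and check the sub-ellipticity condition, writing $p_\varphi=q_1+iq_2$ with $q_1=\xi^2+\eta^2-|\nabla\varphi|^2$ and $q_2=2\la\nabla\varphi,(\xi,\eta)\ra$, one needs $p_\varphi=0\Rightarrow\{q_1,q_2\}\geq 1/C_0$ on a small neighborhood of $(x_0,y_0)$. This holds once $\lambda$ is large because the Laplacian has no first-order terms and $\nabla\varphi\neq0$ near $(x_0,y_0)$, which is exactly Assumption 3.1 of \cite{LL}. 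Then Theorem 3.5 of \cite{LL} applies to $w=\chi u_\varepsilon$, with $\chi$ a cutoff equal to $1$ near $(x_0,y_0)$ supported in the chart, and semiclassical parameter $h=\varepsilon/\mu$ for an auxiliary small $\mu>0$; writing $-\Delta w=\rho w-[\Delta,\chi]u_\varepsilon$ and absorbing the $\rho w$ term into the left-hand side, one gets a weighted $H^1$ bound for $u_\varepsilon$ near $(x_0,y_0)$ controlled by $e^{\mu\varphi/\varepsilon}[\Delta,\chi]u_\varepsilon$ on $\mathrm{Supp}\,\nabla\chi$.

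The hard part will be the bookkeeping of exponents. The commutator is supported on $S_\delta:=\mathrm{Supp}\,\nabla\chi\cap\{f<\delta\}$, which for $\delta$ small lies in a sublevel set $\{\varphi\leq 1-\delta_2\}$, $\delta_2>0$, and on which $u_\varepsilon$ is already controlled either by (\ref{estaprio}) or, crudely, by (\ref{upetit}) since $S_\delta\subset\mathbf{E}$. Evaluating the left-hand side on $B_r\subset\{\varphi\geq 1-\theta(r)\}$ with $\theta(r)\to0$ as $r\to0$, the bound takes the form $\O\bigl(\mu^{-3}\varepsilon^3 e^{(\mu(\theta(r)-\frac12\delta_2)-\pi L)/\varepsilon}+e^{(\mu(m_0-1+\theta(r))-\pi L-\delta')/\varepsilon}\bigr)$ with $m_0=\sup\varphi$, and one must choose, in this order, $\delta$ (hence $\delta_2$), then $r$ small with $\theta(r)\leq\frac14\delta_2$, then $\mu$ small with $\mu(m_0-1+\theta(r))\leq\frac12\delta'$, so that both exponents become strictly smaller than $-\pi L/\varepsilon$. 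The delicacy is precisely that the loss $\mu m_0$ from the supremum of the weight must be beaten by the genuine gain coming from the bad set lying in $\{\varphi\leq 1-\delta_2\}$, which forces $\mu$ small; and that along the deformation these gains remain bounded below, which is where compactness of $K\subset\mathbf{E}$, keeping a positive distance to $\partial\mathcal{B}$, is essential. Verifying the pseudoconvexity condition uniformly along the family of hypersurfaces is routine given the absence of first-order terms in $-\Delta$, but it must be recorded.
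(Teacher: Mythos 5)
Your proposal is correct and follows essentially the same route as the paper: the same convexified weight $\varphi=e^{\lambda(f-(x-x_0)^2-(y-y_0)^2)}$, the Carleman estimate of \cite{LL} (Theorem 3.5) with semiclassical parameter $h=\varepsilon/\mu$, the same absorption of $\rho w$ and treatment of the commutator term on $S_\delta\subset\{\varphi\leq 1-\delta_2\}$ via (\ref{upetit})--(\ref{estaprio}), the same ordering of the choices of $\delta$, $r$ and $\mu$, and the same deformation-plus-covering argument starting from the circle $\{|(x,y)|=R_-\}$. Nothing essential is missing.
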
  
\begin{remark}\sl
\label{remHs}
By using the equation, we deduce that, actually, in the previous estimate $H^1$ can be replaced by any $H^m$, $m\geq 0$.
\end{remark}

\section{Estimate at the boundary}\label{eastbound}

Now, we plan to propagate the estimates of the previous section up to the boundary of $\mathcal B$ (but away from any arbitrarily small neighborhood of $M_0$), by making use of the Carleman estimate at the boundary as stated in \cite{LR}, Proposition 2 (see also \cite{LL}, Theorem 7.6, applied to $e^{-\rho t} u_\varepsilon(x,y)$). 

We consider an arbitrary point $(x_0,y_0)$ on the boundary $\partial{\mathcal B}$ of $\mathcal B$, with $(x_0,y_0)\not= (L,0)$, and a small enough open neighborhood $V$ of $(x_0,y_0)$ in $\R^2$. We also consider a compact neighborhood $K\subset V$ of $(x_0,y_0)$, and we denote by $f$ a function defining $\partial{\mathcal B}$ near $(x_0,y_0)$, in the sense that one has,
$$
{\mathcal B}\cap V =\{ (x,y)\in V\, ; \, f(x,y)<0\},
$$
and $\nabla f \not= 0$ on $V$. Finally, as in following \cite{LL, LR}, one sets,
$$
\varphi (x,y):= e^{\lambda (f(x,y) - (x-x_0)^2-(y-y_0)^2)},
$$
where $\lambda >0$ is fixed sufficiently large and $C_0>\sup_V (f(x,y) - (x-x_0)^2-(y-y_0)^2))$. In particular, if $V$ has been taken sufficiently small, we see (e.g. as in \cite{LL}, Lemma A.1) that $\varphi$ satisfies Assumption (8) of \cite{LR}. Moreover, since the outward pointing unit normal to $\mathbf E$ in $V$ is $n:=-\nabla f/ |\nabla f|$, we also have $\partial_n\varphi \left|_{\partial{\mathbf E}\cap V}\right. <0$. Therefore, we can apply Proposition 2 of \cite{LR} (or, alternatively, Theorem 7.6 of \cite{LL}), and we obtain the existence of a constant $C>0$ such that, for any $\mu, \varepsilon>0$ with $\varepsilon /\mu$ small enough,
\begin{eqnarray*}
&&\Vert e^{\mu\varphi /\varepsilon}\chi u_\varepsilon\Vert_{L^2({\mathbf E}\cap V)}^2+\mu^{-2}\varepsilon^2\Vert e^{\mu\varphi /\varepsilon}\nabla (\chi u_\varepsilon )\Vert_{L^2({\mathbf E}\cap V)}^2\\
&& \hskip 5cm\leq C\mu^{-3}\varepsilon^3\Vert e^{\mu\varphi /\varepsilon}\Delta(\chi u_\varepsilon)\Vert_{L^2({\mathbf E}\cap V)}^2,
\end{eqnarray*}
where $\chi \in C_0^\infty (V\, ;\, [0,1])$ is some fixed cut-off function such that $\chi =1$ on $K$. Using that $-\Delta u_\varepsilon =\rho u_\varepsilon$, for $\varepsilon$ small enough, we deduce,
$$
\Vert e^{\mu\varphi /\varepsilon}u_\varepsilon\Vert_{L^2( {\mathbf E}\cap K)}^2+\mu^{-2}\varepsilon^2\Vert e^{\mu\varphi /\varepsilon}\nabla u_\varepsilon\Vert_{L^2({\mathbf E}\cap K)}^2\leq 2C\mu^{-3}\varepsilon^3\Vert  e^{\mu\varphi /\varepsilon}[\Delta ,\chi] u_\varepsilon)\Vert_{L^2({\mathbf E}\cap V)}^2.
$$
Now, for all $\delta >0$ small enough, on ${\rm Supp} \nabla\chi \cap \{ f\leq \delta\}\cap V$, we have,
$$
\varphi \leq \varphi (x_0,y_0) - \delta',
$$
with $\delta' =\delta'(\delta) >0$. On the other hand, on $\{ f\geq \delta\}\cap V$, by Proposition \ref{nearobst} we have,
$$
\Vert u_\varepsilon\Vert_{L^2(\{ f\geq \delta\}\cap V)}^2 =\O(e^{-(\pi L+\delta')/\varepsilon}).
$$
Therefore, using also (\ref{upetit}), and fixing $\mu>0$ in a convenient way as before, we obtain the existence of $\delta_1>0$, such that,
$$
\Vert e^{\mu\varphi /\varepsilon}u_\varepsilon\Vert_{L^2( {\mathbf E}\cap K)}^2+\varepsilon^2\Vert e^{\mu\varphi /\varepsilon}\nabla u_\varepsilon\Vert_{L^2({\mathbf E}\cap K)}^2 =\O(e^{(\mu\varphi (x_0,y_0)-\pi L -\delta_1)/\varepsilon}),
$$ 
and, if $V'\subset K$ is a sufficiently small neighborhood of $(x_0,y_0)$, we finally obtain,
$$
\Vert u_\varepsilon\Vert_{H^1( {\mathbf E}\cap V')}^2=\O(e^{-(\pi L +\frac12\delta_1)/\varepsilon}).
$$
Since $(x_0,y_0)$ was arbitrary on $\partial{\mathcal B}\backslash \{ M_0\}$ (where $M_0=(L,0)$), we have proved,
\begin{proposition}\sl
\label{estbound}
Under the assumption (\ref{absurd}), for any neighborhood $\mathcal U$ of $M_0$ and any compact set $K\subset \R^2$, there exists $\delta >0$ such that,
$$
\Vert u_\varepsilon\Vert_{H^1({\mathbf E}\cap K \backslash {\mathcal U})}^2 =\O(e^{-(\pi  L+\delta)/\varepsilon}),
$$
uniformly as $\varepsilon \to 0_+$.
\end{proposition}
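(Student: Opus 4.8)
The plan is to propagate the decay of Proposition \ref{nearobst} from the interior of ${\mathbf E}$ up to $\partial\mathcal{B}$ by a Carleman estimate \emph{at the boundary}, applied near each point of $\partial\mathcal{B}\setminus\{M_0\}$, and then to patch the resulting local estimates together by compactness. Fix $(x_0,y_0)\in\partial\mathcal{B}$ with $(x_0,y_0)\neq M_0$, a small open neighbourhood $V$, and a defining function $f$ of $\partial\mathcal{B}$ near $(x_0,y_0)$, so that $\mathcal{B}\cap V=\{f<0\}$ and $\nabla f\neq0$ on $V$. Following \cite{LL,LR}, I would use the convexified weight $\varphi(x,y):=e^{\lambda(f(x,y)-|(x,y)-(x_0,y_0)|^2)}$ with $\lambda$ large (the same choice as in Section \ref{eastob}); shrinking $V$ if necessary, $\varphi$ has non-vanishing gradient and, from the principal symbol $p_\varphi=\xi^2+\eta^2-|\nabla\varphi|^2+2i\langle\nabla\varphi,(\xi,\eta)\rangle$, one checks (as in \cite{LL}, Lemma A.1) H\"ormander's sub-ellipticity condition on $\{p_\varphi=0\}$. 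Moreover the outward unit normal to ${\mathbf E}$ in $V$ is $-\nabla f/|\nabla f|$, whence $\partial_n\varphi<0$ on $\partial{\mathbf E}\cap V$, so the boundary hypothesis of \cite[Proposition 2]{LR} (equivalently \cite[Theorem 7.6]{LL}) holds for the Dirichlet problem.

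Next I would apply that boundary Carleman estimate to $w:=\chi u_\varepsilon$, where $\chi\in C_0^\infty(V;[0,1])$ equals $1$ on a compact neighbourhood $K$ of $(x_0,y_0)$, with semiclassical parameter $h=\varepsilon/\mu$, $\mu>0$ being an extra small parameter fixed at the very end. Using $-\Delta u_\varepsilon=\rho u_\varepsilon$ one writes $\Delta w=-\rho w-[\Delta,\chi]u_\varepsilon$; for $\varepsilon/\mu$ small the $\rho w$ contribution is absorbed into the left-hand side, so the inequality is driven by $[\Delta,\chi]u_\varepsilon$, which is supported on ${\rm Supp}\,\nabla\chi$. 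On the part of this support where $f\geq\delta$, Proposition \ref{nearobst} provides $\|u_\varepsilon\|_{H^1}^2=\O(e^{-(\pi L+\delta')/\varepsilon})$; on the part where $f<\delta$ the weight satisfies $\varphi\leq\varphi(x_0,y_0)-\delta'$ for $\delta$ small enough, and there one only needs the rough bound $\|u_\varepsilon\|_{H^1({\mathbf E})}^2=\O(e^{-\pi L/\varepsilon})$ furnished by (\ref{upetit}). Combining these and taking $\mu$ small yields $\|e^{\mu\varphi/\varepsilon}u_\varepsilon\|_{L^2({\mathbf E}\cap K)}^2=\O(e^{(\mu\varphi(x_0,y_0)-\pi L-\delta_1)/\varepsilon})$ for some $\delta_1>0$; since the weight occurs squared, restricting to a smaller neighbourhood $V'$ on which $\varphi\geq\varphi(x_0,y_0)-s$ with $s$ tiny removes it and gives $\|u_\varepsilon\|_{H^1({\mathbf E}\cap V')}^2=\O(e^{-(\pi L+\frac12\delta_1)/\varepsilon})$.

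Finally, since $\partial\mathcal{B}\setminus\mathcal{U}$ is compact for any neighbourhood $\mathcal{U}$ of $M_0$, it is covered by finitely many such $V'$; adjoining the interior decay of Proposition \ref{nearobst} on the part of ${\mathbf E}\cap K$ at positive distance from $\partial\mathcal{B}$ gives the claimed uniform estimate, with $\delta$ the smallest of the finitely many $\frac12\delta_1$. The step I expect to be the main technical point is the verification that $\varphi$ meets all the hypotheses of the boundary Carleman estimate on a fixed ($\varepsilon$-independent) neighbourhood --- the sub-ellipticity of $p_\varphi$ on its characteristic set and the sign $\partial_n\varphi<0$ on $\partial{\mathbf E}\cap V$ --- together with the balancing of the competing exponential rates $\pi L$, $\pi L/2$, $\delta'$ and $\mu$, which is precisely what forces $\lambda$ large and $\mu$ small. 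The exclusion of $M_0$ is essential here: the argument feeds on the prior estimate on $\{f\geq\delta\}$, and that is available from Proposition \ref{nearobst} only away from the point where the neck ${\mathcal T}(\varepsilon)$ opens into ${\mathbf E}$ --- which is exactly where the resonant state fails to be this small.
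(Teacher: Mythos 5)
Your proposal is correct and follows essentially the same route as the paper: the boundary Carleman estimate of \cite[Proposition 2]{LR} (or \cite[Theorem 7.6]{LL}) with the weight $e^{\lambda(f-|(x,y)-(x_0,y_0)|^2)}$ and semiclassical parameter $\varepsilon/\mu$, applied to $\chi u_\varepsilon$, with the commutator term split according to $\{f\geq\delta\}$ (handled by Proposition \ref{nearobst}) and $\{f<\delta\}$ (handled by (\ref{upetit}) together with the loss $\varphi\leq\varphi(x_0,y_0)-\delta'$), followed by removing the weight on a smaller neighbourhood and a finite covering of $\partial\mathcal{B}\setminus\mathcal{U}$. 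Your closing remarks on the choice of $\lambda$ large, $\mu$ small, and the necessity of excluding $M_0$ match the paper's argument as well.
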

\begin{remark}\sl
\label{remHsbound}
By using the equation and a standard result of regularity on the Dirichlet Laplacian (see, e.g., \cite{Br}), we can deduce that, in the previous estimate, $H^1$ can be replaced by any $H^m$, $m\geq 0$.
\end{remark}

\section{Estimate near the aperture}

Now, we concentrate our attention to a small neighborhood of $M_0$ in $\overline{\mathbf E}$. More precisely, we fix $\varepsilon_1\in (0,\varepsilon_0]$, such that,
$$
\frac{\pi^2}{4\varepsilon_1^2} > \lambda_0,
$$
and we consider the rectangle,
$$
Q:= [L_\varepsilon, L+\varepsilon_1]\times [-\varepsilon_1, \varepsilon_1],
$$
where $L_\varepsilon =L-{\mathcal O}(\varepsilon^2)$ is defined as the unique value such that $(L_\varepsilon, \pm \varepsilon)\in \partial{\mathcal B}$. 

In particular, the point $M_\varepsilon:=(L_\varepsilon, 0)$ belongs to $\partial Q$, and, if $\varepsilon_1$ is taken sufficiently small, then,
$$
Q\backslash (\{L_\varepsilon\}\times [-\varepsilon_1, \varepsilon_1])\subset \Omega(\varepsilon).
$$

Moreover, by Proposition \ref{estbound}, we know the existence of some $\delta >0$ such that $u_\varepsilon$ is $\O(e^{-(\pi  L+\delta)/\varepsilon})$ near $\partial Q\backslash (\{L_\varepsilon\}\times [-\varepsilon_1, \varepsilon_1])$.

Let $\chi\in C_0^\infty (\R^2;[0,1])$ such that (see Figure 2),
\begin{itemize}
\item $\chi = 1$ on $[L_\varepsilon, L+\frac12\varepsilon_1]\times [-\frac12\varepsilon_1, \frac12\varepsilon_1]$;
\item $\chi =0$ on $([L+\varepsilon_1, +\infty)\times \R) \cup (\R\times (-\infty, -\varepsilon_1])\cup (\R \times [\varepsilon_1, +\infty)) $.
\end{itemize}
\begin{figure}[h]
\label{fig2}

\vspace*{0ex}
\centering
\includegraphics[width=10cm,angle=0]{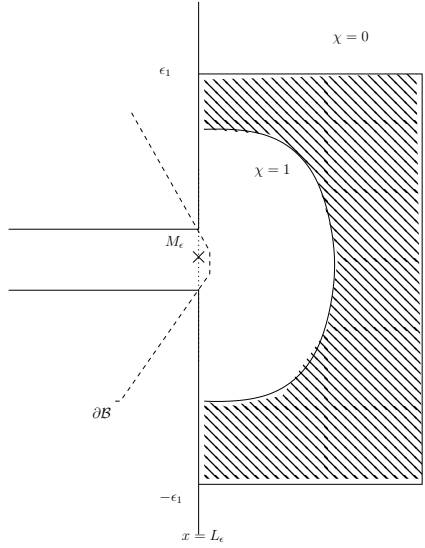}
\vspace*{0ex}
\caption{The aperture.}
\end{figure}

We set,
$$
v:= \chi u_\varepsilon.
$$
In particular, $v\in H^2 (Q)$, and $v\left|_{|y| =\varepsilon_1}\right. =0$. Therefore, on $Q$, we can expand $v$ as,
\be
\label{repv}
v(x,y) =\sum_{j\geq 1} v_j(x)\varphi_j (y),
\ee
where the $\varphi_j$'s are the eigenfunctions of the Dirichlet realization of $-d^2/dy^2$ on $[-\varepsilon_1, \varepsilon_1]$, namely,
$$
\varphi_{2j}(y) =\frac1{\sqrt{\varepsilon_1}} \sin(\alpha_{2j}y/\varepsilon_1)\, ;\, \varphi_{2j-1}(y) = \frac1{\sqrt{\varepsilon_1}}\cos(\alpha_{2j-1}y/\varepsilon_1)\, ;\, \alpha_j:= \frac{j\pi}2,
$$
and $v_j\in H^2([L_\varepsilon,L+\varepsilon_1])$. 
Moreover, using Proposition \ref{estbound} and Remark \ref{remHsbound}, on $Q$ we have,
$$
-\Delta v =\rho v + r
$$
where $\Vert r\Vert^2_{H^m(Q)}=\Vert [\Delta, \chi]u_\varepsilon\Vert^2_{H^m(Q)}=\O(e^{-(\pi  L+\delta)/\varepsilon})$, and $r\left|_{|y| =\varepsilon_1}\right. =0$ ($m\geq 0$ arbitrary, and $\delta =\delta (m) >0$). 
We deduce that the $v_j$'s verify,
\be
\label{eqvj}
-v_j'' +\beta_j v_j = r_j,
\ee
where we have set $\beta_j:= \frac{\alpha_j^2}{\varepsilon_1^2} -\rho$, and $r_j:=\int_{-\varepsilon_1}^{\varepsilon_1} r(x,y)\varphi_j(y)dy$, so that we have,
\be
\label{estrj}
\sum_{j\geq 1}j^m\Vert r_j\Vert^2_{H^m([L,L+\varepsilon_1])}=\O(e^{-(\pi  L+\delta)/\varepsilon}).
\ee
 By construction, we also have $v_j =0$ on $[L+\varepsilon_1, +\infty)$.

\begin{proposition}\sl 
\label{propb} Assume (\ref{absurd}). Then,
for all $j\geq 1$, there exist $b_j\in \C$ and $s_j\in \cap_{m\geq 0}H^m([L,L+\varepsilon_1]$, such that,
\begin{eqnarray*} 
&& v_j(x) = b_je^{-(x-L_\varepsilon)\sqrt{\beta_j}} + s_j(x);\\
&& \sum_{j\geq 1}j^m\Vert s_j\Vert^2_{H^m([L_\varepsilon,L+\varepsilon_1])}=\O(e^{-(\pi  L+\delta_m)/\varepsilon}),
\end{eqnarray*}
with $\delta_m >0$ and uniformly with respect to $\varepsilon$ small enough.
\end{proposition}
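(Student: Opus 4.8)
The plan is to solve the second-order ODE \eqref{eqvj} on the interval $[L_\varepsilon,L+\varepsilon_1]$ with the forcing term $r_j$ controlled by \eqref{estrj}, and to extract the ``incoming'' exponential $e^{-(x-L_\varepsilon)\sqrt{\beta_j}}$ as the slowly-decaying part while absorbing everything else into a remainder $s_j$ whose sum over $j$ is exponentially small. First I would observe that $\beta_j = \alpha_j^2/\varepsilon_1^2 - \rho$ has positive real part for all $j\geq 1$ (since $\alpha_1^2/\varepsilon_1^2 = \pi^2/4\varepsilon_1^2 > \lambda_0 \approx \re\rho$ by our choice of $\varepsilon_1$), so $\sqrt{\beta_j}$ is well-defined with positive real part, and the two fundamental solutions of the homogeneous equation are $e^{\pm(x-L_\varepsilon)\sqrt{\beta_j}}$. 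The general solution of \eqref{eqvj} can then be written via the variation-of-constants formula; imposing the boundary condition $v_j = 0$ on $[L+\varepsilon_1,+\infty)$ (in particular $v_j(L+\varepsilon_1) = v_j'(L+\varepsilon_1) = 0$, since $v_j \in H^2$ and vanishes on an interval) fixes the two constants, and one of them is the coefficient of the growing exponential $e^{+(x-L_\varepsilon)\sqrt{\beta_j}}$.

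The key point is that, because $v_j$ is forced only by the exponentially small $r_j$ and is required to vanish at the right endpoint, the coefficient of the growing exponential is itself exponentially small (it is an integral of $r_j$ against a bounded kernel over an interval of length $O(1)$), and so is the particular solution coming from $r_j$. More precisely, writing
\[
v_j(x) = b_j e^{-(x-L_\varepsilon)\sqrt{\beta_j}} + c_j e^{+(x-L_\varepsilon)\sqrt{\beta_j}} + \frac{1}{2\sqrt{\beta_j}}\int_x^{L+\varepsilon_1}\left(e^{(x-t)\sqrt{\beta_j}} - e^{-(x-t)\sqrt{\beta_j}}\right) r_j(t)\,dt,
\]
the vanishing of $v_j$ and $v_j'$ at $x = L+\varepsilon_1$ gives two linear equations for $b_j, c_j$ whose solution shows $c_j = O(\|r_j\|)$ and the particular integral is likewise $O(\|r_j\|/\sqrt{\re\beta_j})$ in $H^m$. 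Since $\sqrt{\re\beta_j}$ grows like $j$, setting
\[
s_j(x) := c_j e^{+(x-L_\varepsilon)\sqrt{\beta_j}} + \frac{1}{2\sqrt{\beta_j}}\int_x^{L+\varepsilon_1}\left(e^{(x-t)\sqrt{\beta_j}} - e^{-(x-t)\sqrt{\beta_j}}\right) r_j(t)\,dt
\]
yields $\sum_j j^m \|s_j\|_{H^m}^2 = O(\sum_j j^m \|r_j\|_{H^m}^2) = O(e^{-(\pi L+\delta)/\varepsilon})$ by \eqref{estrj}, after adjusting $\delta_m$ slightly downward to absorb polynomial-in-$j$ factors (which only cost a factor $(\text{poly}(1/\varepsilon))$, hence negligible at the exponential scale). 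One must also check that the $H^m$-norms of the two explicit exponential fundamental solutions on the fixed interval $[L_\varepsilon, L+\varepsilon_1]$ grow at most polynomially in $j$ (indeed $\|e^{\pm(x-L_\varepsilon)\sqrt{\beta_j}}\|_{H^m([L_\varepsilon,L+\varepsilon_1])} = O(j^m)$), which is what allows the series manipulation.

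The main obstacle I anticipate is \emph{not} the ODE analysis itself, which is routine, but rather verifying uniformity in $j$ and keeping track of the $\varepsilon$-dependence of $L_\varepsilon$ (which is $L - O(\varepsilon^2)$, hence harmless) and of $\rho = \rho(\varepsilon)$ in $\sqrt{\beta_j}$: one needs a lower bound $\re\beta_j \geq \tfrac12\alpha_j^2/\varepsilon_1^2 - C$ uniform in $j$ and in $\varepsilon$ small, so that the decaying exponential is genuinely decaying and the kernel estimates are uniform. Since $\im\rho$ is exponentially small and $\re\rho\to\lambda_0 < \pi^2/4\varepsilon_1^2 \leq \alpha_1^2/\varepsilon_1^2$, this uniform lower bound holds for all $j\geq 1$ once $\varepsilon$ is small enough. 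A secondary subtlety is that the decomposition into $b_j$ and $s_j$ is not canonical (any multiple of the decaying exponential can be shuffled between them), but this does not matter: we only claim \emph{existence} of such $b_j$, and the construction above provides one choice. The regularity $s_j \in \cap_{m\geq 0} H^m$ is immediate from the formula since $r_j \in \cap_m H^m$ by \eqref{estrj} and the exponentials are smooth.
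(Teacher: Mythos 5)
There is a genuine gap, and it sits exactly at the point your argument relies on. With the representation
\[
v_j(x) = b_j e^{-(x-L_\varepsilon)\sqrt{\beta_j}} + c_j e^{+(x-L_\varepsilon)\sqrt{\beta_j}} + P_j(x),\qquad
P_j(x):=\frac{1}{2\sqrt{\beta_j}}\int_x^{L+\varepsilon_1}\bigl(e^{(x-t)\sqrt{\beta_j}}-e^{-(x-t)\sqrt{\beta_j}}\bigr)r_j(t)\,dt,
\]
note that $P_j$ itself already satisfies $P_j(L+\varepsilon_1)=P_j'(L+\varepsilon_1)=0$. Hence your two linear equations at $x=L+\varepsilon_1$ are homogeneous and force $b_j=c_j=0$: your decomposition collapses to $v_j=P_j$, with $s_j=P_j$ and $b_j=0$. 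The whole burden then falls on the claim that $P_j=\O(\Vert r_j\Vert/\sqrt{\re\beta_j})$ in $H^m$, and that claim is false: the second piece of the kernel, $e^{-(x-t)\sqrt{\beta_j}}=e^{(t-x)\sqrt{\beta_j}}$ with $t\geq x$, has $\re\bigl((t-x)\sqrt{\beta_j}\bigr)$ as large as roughly $\varepsilon_1\cdot\frac{j\pi}{2\varepsilon_1}=\frac{j\pi}{2}$ on the interval, so the honest bound carries a factor $e^{cj}$ ($c>0$). The weights $j^m$ in (\ref{estrj}) are only polynomial in $j$ and cannot absorb this; for $j\sim 1/\varepsilon$ the factor is $e^{c/\varepsilon}$ and also destroys the exponential smallness in $1/\varepsilon$. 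In fact, if your bound were true you would have proved that $v_j$ itself (hence the trace of $u_\varepsilon$ at the aperture) is $\O(e^{-(\pi L+\delta)/2\varepsilon})$ in the relevant norms, which is far stronger than the proposition and is not available at this stage -- the entire point of introducing $b_j$ (which the statement does \emph{not} claim to be small) is that the decaying-exponential component of $v_j$ may be much larger than $s_j$.

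The missing idea is the re-splitting that the paper performs on the Duhamel formula. Writing $v_j$ (your $P_j$, after the $x_1$-integration of the first-order factorization) as
\[
v_j(x)=-\int_x^{L+\varepsilon_1}\!\!\int_{x_1}^{L+\varepsilon_1} e^{(2x_1-t-x)\sqrt{\beta_j}}\,r_j(t)\,dt\,dx_1,
\]
one replaces $-\int_x^{L+\varepsilon_1}dx_1$ by $-\int_{L_\varepsilon}^{L+\varepsilon_1}dx_1+\int_{L_\varepsilon}^{x}dx_1$. The first piece is an $x$-independent constant times $e^{-(x-L_\varepsilon)\sqrt{\beta_j}}$ (this defines $b_j$, and it is precisely where the potentially growing part of the kernel is dumped), while in the second piece one has $x_1\leq x$ and $x_1\leq t$, so $\re\bigl((2x_1-t-x)\sqrt{\beta_j}\bigr)\leq 0$ and the kernel is bounded by $1$; only then does (\ref{estrj}) give $\sum_j j^m\Vert s_j\Vert^2_{H^m}=\O(e^{-(\pi L+\delta_m)/\varepsilon})$ (derivatives costing powers of $\sqrt{\beta_j}\sim j$ that the weights do absorb). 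Equivalently, in your one-integral form: split $-\frac{1}{2\sqrt{\beta_j}}\int_x^{L+\varepsilon_1}e^{(t-x)\sqrt{\beta_j}}r_j\,dt$ as $e^{-(x-L_\varepsilon)\sqrt{\beta_j}}$ times the constant $-\frac{1}{2\sqrt{\beta_j}}\int_{L_\varepsilon}^{L+\varepsilon_1}e^{(t-L_\varepsilon)\sqrt{\beta_j}}r_j\,dt$, plus $\frac{1}{2\sqrt{\beta_j}}\int_{L_\varepsilon}^{x}e^{(t-x)\sqrt{\beta_j}}r_j\,dt$, whose exponent is non-positive. Your setup (the ODE, the zero Cauchy data at $L+\varepsilon_1$, the uniform lower bound on $\re\beta_j$) is the same as the paper's and is fine; without this extraction of the constant, however, the key estimate in your proof does not hold.
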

\begin{proof} Set,
$$
 W_j:= \left(\begin{array}{c} v_j\\ v'_j\end{array}\right).
$$
Then, by (\ref{eqvj}), $W_j$ is solution of,
$$
\left\{
\begin{array}{l}
W_j' = A_j W_j -R_j;\\
W_j(L+\varepsilon_1) =0,
\end{array}\right.
$$
with $A_j:=\left(\begin{array}{cc} 0 & 1\\ \beta_j & 0\end{array}\right)$ and $R_j:=\left(\begin{array}{c} 0\\ r_j\end{array}\right)$. Therefore,
$$
W_j (x) =\int_x^{L+\varepsilon_1} e^{(x-t)A_j}R_j(t)dt,
$$
and, diagonalizing $A_j$ and re-writing the solution in a basis of eigenvectors of $A_j$, we obtain in particular,
$$
v'_j(x)+\sqrt{\beta_j}v_j(x)=\int_x^{L+\varepsilon_1}e^{(x-t)\sqrt{\beta_j}}r_j(t)dt.
$$
Using again that $v(L+\varepsilon_1)=0$, we deduce,
$$
v_j(x) =-\int_x^{L+\varepsilon_1}\int_{x_1}^{L+\varepsilon_1}e^{(2x_1-t-x)\sqrt{\beta_j}}r_j(t)dtdx_1.
$$ 
Then, the results follows with $b_j:=-\int_{L_\varepsilon}^{L+\varepsilon_1}\int_{x_1}^{L+\varepsilon_1}e^{(2x_1-t-L_\varepsilon)\sqrt{\beta_j}}r_j(t)dtdx_1$ and $s_j(x):=\int_{L_\varepsilon}^{x}\int_{x_1}^{L+\varepsilon_1}e^{(2x_1-t-x)\sqrt{\beta_j}}r_j(t)dtdx_1$, by observing that $\re((2x_1-t-x)\sqrt{\beta_j})<0$ on the domain of integration of $s_j(x)$ and by using (\ref{estrj}).
\end{proof}

\begin{remark}\sl
Let $\varepsilon_2 \in (0, \frac12\varepsilon_1)$ arbitrary. By Proposition \ref{nearobst}, we know that there exists a constant $\delta =\delta(\varepsilon_2) >0$ such that,
$$
\|v\|_{L^2((L+\varepsilon_2,L+\varepsilon_1)\times (-\varepsilon_1,\varepsilon_1))}=\O(e^{-(\pi  L+\delta)/2\varepsilon}).
$$
On the other hand, using
(\ref{repv}) and Proposition \ref{propb}, on $(L_\varepsilon,L+\varepsilon_1)\times (-\varepsilon_1,\varepsilon_1))$, we have,
$$
v(x,y) =\sum_{j\geq 1} b_je^{-(x-L_\varepsilon)\sqrt{\beta_j}}\varphi_j(y) + s(x,y),
$$
with $\| s\|_{L^2((L_\varepsilon,L+\varepsilon_1)\times (-\varepsilon_1,\varepsilon_1))}=\O(e^{-(\pi  L+\delta_0)/2\varepsilon})$ for some constant $\delta_0 >0$. Since $\sqrt \beta_j\sim \frac{j\pi}{2\varepsilon_1}$ as $j\to\infty$, and $\varepsilon_2$ is arbitrarilly small, we immediately deduce that, for any $\nu>0$, there exists $\delta =\delta(\nu)>0$, such that,
\be
\label{estbj1}
\sum_{j\geq 1} |b_j|^2 e^{ -\nu j }=\O(e^{-(\pi  L+\delta)/\varepsilon}),
\ee
uniformly as $\varepsilon \to 0_+$.
  \end{remark}
  
 \section{Representations at the aperture}
In this section, we consider the trace of $v$ on $\{ x=L_\varepsilon\}$. By construction, it also coincides with the trace $u_\varepsilon$ as long as $|y| < \frac12\varepsilon_1$. Now, as in \cite{MN}, there are two ways of taking this trace, depending if one takes the limit $x\to (L_\varepsilon)_+$ or $x\to (L_\varepsilon)_-$.

Considering first the limit $x\to (L_\varepsilon)_-$, we can just apply the results of \cite{MN}, Sections 4 \& 6 (in particular (4.2), (4.3) and Lemma 6.1), and, for $x<L_\varepsilon$ close to $L_\varepsilon$ and $|y| < \varepsilon$, we obtain,
\be
\label{letube}
v(x,y) = \sum_{k=1}^\infty \left( a_{k,+} e^{\theta_k x/\varepsilon}+a_{k,-} e^{-\theta_k x/\varepsilon}\right)\psi_k(y),
\ee
where we have used the notations,
\begin{eqnarray*}
&& \psi_{2k}(y) =\frac1{\sqrt{\varepsilon}} \sin(\alpha_{2k}y/\varepsilon)\, ;\, \psi_{2k-1}(y) = \frac1{\sqrt{\varepsilon}}\cos(\alpha_{2k-1}y/ \varepsilon)\, ;\, \alpha_k:= \frac{k\pi}2;\\
&&\theta_k:=\sqrt{\alpha_k^2-\varepsilon^2\rho(\varepsilon)},
\end{eqnarray*}
(here $\sqrt{\cdot}$ stands for the principal square root), and where $a_{k,\pm}$ are ($\varepsilon$-dependent) constant complex numbers. Moreover, the sum converges in $H^2((L-\varepsilon_1, L_\varepsilon)\times (-\varepsilon, \varepsilon))$, and the limit $x\to (L_\varepsilon)_-$ gives (see \cite{MN}, Lemma 6.1),
\be
\label{match1}
v(L_\varepsilon,y) = \sum_{k=1}^\infty \left( a_{k,+} e^{\theta_k L_\varepsilon/\varepsilon}+a_{k,-} e^{-\theta_k L_\varepsilon/\varepsilon}\right)\psi_k(y),
\ee
together with (see \cite{MN}, formula (6.7)),
\be
\partial_x v (L,y) = \frac1{\varepsilon}\sum_{k\geq 1}\theta_k\left( a_{k,+} e^{\theta_k L_\varepsilon/\varepsilon}-a_{k,-} e^{-\theta_k L_\varepsilon/\varepsilon}\right)\psi_k(y)\, \mbox{ in } H^{1/2}(|y|\leq\varepsilon).
\ee

Then, starting from(\ref{repv}), and using similar arguments,  the limit $x\to (L_\varepsilon)_+$ can be taken in the same way, and, using Proposition \ref{propb}, we obtain,
\be
v(L_\varepsilon,y) = \sum_{j= 1}^\infty \left(b_j+s_j(L_\varepsilon)\right)\varphi_j (y),
\ee
together with,
\be
 \partial_x v (L_\varepsilon,y) =  \sum_{j= 1}^\infty (-\sqrt{\beta_j} b_j+{s_j}'(L_\varepsilon)) \varphi_j (y)
 \mbox{ in } H^{1/2}(|y|\leq \varepsilon_1).
 \ee
Moreover, still by Proposition \ref{propb}, we have,
\be
\label{match2}
\sum_{j\geq 1}\left( |s_j(L_\varepsilon)|^2 + |s_j'(L_\varepsilon)|^2\right) =\O(e^{-(\pi L +\delta )/\varepsilon}),
\ee
for some constant $\delta >0$.

\section{Estimates on the coefficients }

At this point, we can proceed as \cite{MN}, Section 7 (but working with $v$ instead of $u_\varepsilon$), with the difference that, in our present case, the index $j_0$ appearing in \cite{MN}, formula (6.8), is just 0 (that is, all the sums over $\{j\leq j_0\}$ become null). For the sake of completeness, we briefly reproduce these arguments here.

The main idea consists in computing in two different ways the three following quantities:
$$
\la v, \partial_x v\ra_{\{L_\varepsilon\}\times[-\varepsilon,\varepsilon]}\, , \, \la v,\varphi_1\ra_{\{L_\varepsilon\}\times[-\varepsilon,\varepsilon]}\, , \, \la\partial_x v,\psi_1\ra_{\{L\}\times[-\varepsilon,\varepsilon]}.
$$
We set
$$
A_{k,\pm}:= a_{k,\pm}e^{\pm \theta_k L/\varepsilon}.
$$

In view of (\ref{match1})-(\ref{match2}),  the two computations of  $\la v, \partial_x v\ra_{\{L_\varepsilon\}\times[-\varepsilon,\varepsilon]}$ give the identity
$$\frac1{\varepsilon}\sum_{k\geq 1}\theta_k(|A_{k,+}|^2-|A_{k,-}|^2+2i\im (A_{k,+}\overline{A}_{k,-}))
=-\sum_{j\geq 1}(\sqrt{\beta_j })|b_j|^2+r(\varepsilon),
$$
with
\be
\begin{split}
r(\varepsilon) &=\O(e^{-(\pi L +\delta )/\varepsilon} + e^{-(\pi L +\delta )/2\varepsilon}(\sum_{j\geq 1}|b_j|^2)^{\frac12})\\
&=\O(e^{-(\pi L +\frac{\delta}2 )/\varepsilon} + e^{-\delta /\varepsilon}\sum_{j\geq 1}|b_j|^2).
\end{split}
\ee
Taking the real part, and using the fact that $\re\theta_k \sim k\pi /2$ as $k\to\infty$, while $|\im\theta_k|={\mathcal O}(k^{-1}e^{-\delta /\varepsilon})$ for some constant $\delta >0$,  we obtain, 

\begin{eqnarray*}
&& \frac1{\varepsilon}\sum_{k\geq 1}(\re\theta_k)(|A_{k,+}|^2-|A_{k,-}|^2) +\frac1{\varepsilon}\sum_{k\geq 1}{\mathcal O}(k^{-1}e^{-\delta/\varepsilon}) |A_{k,+}A_{k,-}|\\
&&\hskip 6cm = -\sum_{j\geq 1}(\re\sqrt{\beta_j })|b_{j}|^2 + r(\varepsilon).
\end{eqnarray*}
In particular, since $\re\sqrt{\beta_j }= \frac{\pi  j}{2\varepsilon_1}(1+{\mathcal O}(\varepsilon^2 j^{-2}))$, we see that there exists a constant $C>0$ such that
\begin{equation}
\begin{split}
\label{est1}
  \sum_{k\geq 1}\re\theta_k(|A_{k,+}|^2-|A_{k,-}|^2) &
\leq  C\sum_{k\geq 1}k^{-1}e^{-\delta/\varepsilon} |A_{k,+}A_{k,-}| \\
&-\frac{\pi}2\frac{\varepsilon}{\varepsilon_1} \sum_{j\geq 1 } j(1-C\varepsilon^2j^{-2})|b_{j}|^2+r(\varepsilon) .
\end{split}
\end{equation}

Moreover, by  Appendix A in \cite{MN}, there exists a constant $c>0$, such that,
\be\label{a-}\sum_{k\geq 1}  k  |a_{k,-} e^{-c\theta_k} |^2 =\O( \varepsilon ^{-1/2} ),\ee
and thus, for $\varepsilon$ small enough,
\be
\label{Ak2}
\sum_{k\geq 2} k| A_{k,-}|^2=\sum_{k\geq 2} k| a_{k,-}e^{-c\theta_k}|^2e^{-2\theta_k (\frac{L}{\varepsilon}-c)} ={\mathcal O}(\varepsilon^{-1/2}e^{-2\pi L/\varepsilon}).
\ee

Therefore, we deduce from (\ref{est1})(with some new positive constants $C, \delta$), 
\begin{equation}
\begin{split}
\label{est1'}
 & \sum_{k\geq 1}(k-Ck^{-1}e^{-\delta/\varepsilon})|A_{k,+}|^2  \\
 & \leq (1+Ce^{-\delta /\varepsilon}) |A_{1,-}|^2-\frac {2\varepsilon}{\pi}(1+r_1(\varepsilon))\sum_{j\geq 1} \re\sqrt{\beta_j} |b_{j}|^2+ r_2(\varepsilon),
 \end{split}
\end{equation}
with
\be
\label{estr1}
r_1(\varepsilon) =\O\left(e^{-\delta /\varepsilon}\right)\quad ;\quad
r_2(\varepsilon) =\O\left(e^{-(\pi L +\delta )/\varepsilon} \right).
\ee

Now,  computing  $\la v (L_\varepsilon, \cdot),\varphi_1\ra_{L^2(|y|<\varepsilon)}$  and $\la \partial_xv (L_\varepsilon, \cdot), \psi_1\ra_{L^2(|y|<\varepsilon)}$ in two different ways (by using (\ref{match1})-(\ref{match2})), we find
\begin{eqnarray*}
&&\sum_{k\geq 1} \mu_k(A_{k,+}+A_{k,-}) =b_{1};\\
&& \frac1{\varepsilon}\theta_1(A_{1,+}-A_{1,-}) = -\sum_{j\geq 1}\nu_j(\sqrt{\beta_j }b_{j}-s_j'(L_\varepsilon)),
\end{eqnarray*}
with
$$
\mu_k:=\int_{-\varepsilon}^\varepsilon \psi_k(y)\varphi_1(y)dy =\left\{\begin{array}{l} 0 \mbox{ if $k$ is even};\\ \\
(-1)^{\frac{k-1}2}\frac{4k\sqrt{\varepsilon/\varepsilon_1}}{\pi (k^2-{(\varepsilon/{\varepsilon_1})}^2)} \cos{\frac \pi 2 \frac{\varepsilon}{\varepsilon_1} } \, \mbox{ if $k$ is odd},
\end{array}\right.
$$
and
$$
\nu_j:= \int_{-\varepsilon}^\varepsilon \varphi_j(y)\psi_1(y)dy =\left\{\begin{array}{l}
0 \mbox{ if $j$ is even};\\ \\
\frac{4\sqrt{\varepsilon/\varepsilon_1}\sin(((\varepsilon/\varepsilon_1) j -1)\pi/2)}{\pi ({(\varepsilon/\varepsilon_1)}^2j^2-1)} \mbox{ if $j\not= \frac{\varepsilon_1}{\varepsilon}$ is odd};\\ \\
\sqrt{(\varepsilon/\varepsilon_1)} \mbox{ if $j=\frac{\varepsilon_1}{\varepsilon}$ is odd}.
\end{array}\right.
$$
Using (\ref{Ak2}) again and (\ref{estbj1}), we obtain
 \begin{eqnarray}
\label{A+B}
&& |A_{1,+} + A_{1,-}| \leq  C e^{-(\pi  L+\delta)/2\varepsilon}+ \sum_{k\geq 2}|\frac{\mu_k }{\mu_1}A_{k,+}| +\frac{C}{\sqrt{\varepsilon}}e^{-\pi L/\varepsilon};\\
\label{A-B}
&& | A_{1,+} - A_{1,-}| \leq    \frac{\varepsilon}{|\theta_1|} \sum_{j\geq 1 }|\nu_j\sqrt{\beta_j}b_{j}|+C e^{-(\pi  L+\delta)/2\varepsilon},
\end{eqnarray}
with some new constant $C>0$.

Then, we observe that $|\mu_k/\mu_1|\leq (k-\frac{\varepsilon^2}{\varepsilon_1^2})^{-1}$ ($k$ odd), thus by (\ref{est1'}),
\begin{equation}\begin{split}
\sum_{k\geq 2}|\frac{\mu_k }{\mu_1}A_{k,+}|\leq \left(\sum_{k\geq 3}\frac1{k(k-\frac{\varepsilon^2}{\varepsilon_1^2})^2}\right)^{\frac12}\left(\sum_{k\geq 2}k|A_{k,+}|^2\right)^{\frac12}\\
\label{estsom1}
\leq \tau_1  \left(\alpha |A_{1,-}|^2- \beta\frac {2\varepsilon}{\pi}\sum_{j\geq 1}\re\sqrt{\beta_j}|b_{j}|^2+ r_2(\varepsilon) \right)^{\frac12} + Ce^{-(\pi L +\delta) /2\varepsilon},
\end{split}\end{equation}
where $\tau_1$ can be taken arbitrarily close to $(\sum_{k\geq 3}k^{-3})^{\frac12}<\frac12$, and $\alpha,\beta$ are positive numbers that tend to 1 as $\varepsilon \to 0$, and are such that $\alpha |A_{1,-}|^2- \beta\frac {2\varepsilon}{\pi}\sum_{j\geq 1}\re\sqrt{\beta_j}|b_{j}|^2+ r_2(\varepsilon)$ remains non negative for all $\varepsilon >0$ small enough.
Inserting (\ref{estsom1}) into (\ref{A+B}), we obtain
\begin{equation}
\label{A+B1}
|A_{1,+} + A_{1,-}| \leq \tau_1   \left(\alpha |A_{1,-}|^2-\beta\frac {2\varepsilon}{\pi}\sum_{j\geq 1}\re\sqrt{\beta_j}|b_{j}|^2+ r_2(\varepsilon)    \right)^{\frac12}  
+2Ce^{-(\pi L +\delta) /2\varepsilon}.
\end{equation}
On the other hand, going back to (\ref{A-B}), the Cauchy-Schwarz inequality gives,
\begin{eqnarray}
 \label{A-B2}
\frac{\varepsilon}{ |\theta_1|} \sum_{j\geq 1}|\nu_j\sqrt{\beta_j}b_{j}|
 \leq  \tau_2  \left(\frac{2\varepsilon}{\pi}\sum_{j\geq 1}|b_{j}|^2|\sqrt{\beta_j}|\right)^{\frac12}
 \end{eqnarray}
 with
\be
 \begin{split}
 \tau_2^2 &=  \frac {\varepsilon\pi}{2|\theta_1|^2}\sum_{j\geq 1}  j|\nu_{j} |^2|\sqrt{\beta_j}|\\
 &=\frac{16}{\pi^2}(1+\O(\varepsilon^2))\sum_{j\geq 1,\, j\,{\rm odd}}\frac{\varepsilon}{\varepsilon_1}\frac{\frac{j\varepsilon}{\varepsilon_1}\sin^2\left( (\frac{j\varepsilon}{\varepsilon_1}-1)\frac{\pi}2\right)}{\left( (\frac{j\varepsilon}{\varepsilon_1})^2-1\right)^2}(1+\O(j^{-2}))
 \end{split}
\ee
 In particular, when $\varepsilon \to 0$, then $\tau_2$ tends to 
 $\Gamma_2 :=  \frac{2\sqrt2}{\pi} \left( \int_0^\infty    \frac{ x \sin^2((x-1)\pi/2)}  {(x^2-1)^2}dx \right)^{\frac12}$,
and we deduce from (\ref{A-B}) and (\ref{A-B2}), plus the fact that $\im\sqrt{\beta_j} =\O (e^{-\delta/\varepsilon})$ uniformly,
\be
\label{A-B1}
 | A_{1,+} - A_{1,-}| \leq  \tilde\tau_2 \left(\frac{2\varepsilon}{\pi}\sum_{j\geq 1}\re\sqrt{\beta_j}|b_{j}|^2\right)^{\frac12}+ C e^{-(\pi L+\delta)/2\varepsilon},
\ee
where $\tilde\tau_2$ can be taken arbitrarily close to $\Gamma_2$. Actually, $\Gamma_2$ can be computed exactly, and one finds,
$$
\Gamma_2 = \frac{2\sqrt2}{\pi} \left( -\frac12+\frac{\pi}4{\rm Si}( \pi) \right)^{\frac12}\approx 0,879.
$$
(Here, ${\rm Si} (x):= \int_0^x\frac{\sin t}{t}dt $.)

Summing (\ref{A+B1}) with (\ref{A-B1}), and using the triangle inequality, we finally obtain
\be
\label{estA1}
2|A_{1,-}|  \leq \tau_1   \sqrt{\alpha |A_{1,-}|^2-\beta X+r_2(\varepsilon)}+\tau_2\sqrt{X}  +3Ce^{-(\pi L +\delta) /2\varepsilon},
\ee
where we have set
$$
X:=\frac {2\varepsilon}{\pi}\sum_{j}\re\sqrt{\beta_j}|b_{j}|^2.
$$
Now, an elementary computation shows that the map
$$
[0,A^2]\ni Y\mapsto \tau_1   \sqrt{A^2-\beta Y^2}+\tau_2 Y
$$
reaches its maximum at $Y=\frac{\tau_2^2}{\beta\tau_1^2+\tau_2^2}A/\sqrt\beta$, and the maximum value is
$$
\left(\sqrt{\tau_1^2+\beta^{-1}\tilde\tau_2^2}\right) A.
$$
Therefore, we deduce from (\ref{estA1}),
\be
\begin{split}
2|A_{1,-}|&\leq  \left(\sqrt{\tau_1^2+\beta^{-1}\tilde\tau_2^2}\right)\sqrt{\alpha |A_{1,-}|^2+ r_2(\varepsilon)} +3Ce^{-(\pi L +\delta) /2\varepsilon}\\
&\leq \left(\sqrt{\alpha(\tau_1^2+\beta^{-1}\tilde\tau_2^2)}\right) |A_{1,-}|+\O(e^{-(\pi L +\delta) /2\varepsilon}).
\end{split}
\ee
Since $\sqrt{\alpha(\tau_1^2+\beta^{-1}\tau_2^2)}$ tends to $\sqrt{\sum_{k\geq 2}k^{-3} + \Gamma_2^2}$ as $\varepsilon\to 0$, and
$$
\sum_{k\geq 3}k^{-3} + \Gamma_2^2 \leq \frac14+\frac8{10} <4,
$$
 we have proved,
\begin{proposition}\sl Under the assumption (\ref{absurd}),
there exist two constants $C, \delta >0$ such that, for any $\varepsilon >0$ small enough, one has,
\be
\label{atop}
|A_{1,-}|\leq  Ce^{-(\pi L +\delta) /2\varepsilon}.
\ee
\end{proposition}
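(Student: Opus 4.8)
The plan is to exploit the two distinct expansions of $v$ on a neighbourhood of the segment $\{x=L_\varepsilon\}\times[-\varepsilon,\varepsilon]$ --- the ``tube side'' expansion (\ref{letube})--(\ref{match1}), whose coefficients are $A_{k,\pm}:=a_{k,\pm}e^{\pm\theta_k L/\varepsilon}$, and the ``exterior side'' expansion coming from (\ref{repv}) and Proposition \ref{propb}, whose coefficients are the $b_j$ --- and, by matching traces and normal derivatives across $\{x=L_\varepsilon\}$, to distill a single scalar inequality for $|A_{1,-}|$ of the form $2|A_{1,-}|\le c\,|A_{1,-}|+\O(e^{-(\pi L+\delta)/2\varepsilon})$ with an \emph{explicit} constant $c<2$. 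Absorbing the $|A_{1,-}|$ term on the left then gives (\ref{atop}).

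First I would evaluate $\la v,\partial_x v\ra_{\{L_\varepsilon\}\times[-\varepsilon,\varepsilon]}$ in the two available ways; using (\ref{match1})--(\ref{match2}) and the orthonormality of the transverse modes this produces the energy-flux identity $\frac1\varepsilon\sum_{k\ge1}\theta_k\big(|A_{k,+}|^2-|A_{k,-}|^2+2i\im(A_{k,+}\overline{A}_{k,-})\big)=-\sum_{j\ge1}\sqrt{\beta_j}|b_j|^2+r(\varepsilon)$, with $r(\varepsilon)$ controlled by (\ref{match2}) and $\sum|b_j|^2$. Taking the real part and using $\re\theta_k\sim k\pi/2$, $|\im\theta_k|=\O(k^{-1}e^{-\delta/\varepsilon})$, $\re\sqrt{\beta_j}=\frac{\pi j}{2\varepsilon_1}(1+\O(\varepsilon^2 j^{-2}))$ yields (\ref{est1}); the bound (\ref{a-}) from Appendix A of \cite{MN} then gives, after reinserting the exponential weight, $\sum_{k\ge2}k|A_{k,-}|^2=\O(\varepsilon^{-1/2}e^{-2\pi L/\varepsilon})$, so every $A_{k,-}$ with $k\ge2$ is negligible and (\ref{est1}) collapses to the one-sided constraint (\ref{est1'}), which forces both $\sum_{k\ge2}k|A_{k,+}|^2$ and $X:=\frac{2\varepsilon}{\pi}\sum_j\re\sqrt{\beta_j}|b_j|^2$ to be controlled by $|A_{1,-}|^2$ up to exponentially small errors. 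Next I would test $v(L_\varepsilon,\cdot)$ against $\varphi_1$ and $\partial_x v(L_\varepsilon,\cdot)$ against $\psi_1$, again in the two ways, obtaining the scalar relations $\sum_k\mu_k(A_{k,+}+A_{k,-})=b_1$ and $\frac1\varepsilon\theta_1(A_{1,+}-A_{1,-})=-\sum_j\nu_j(\sqrt{\beta_j}b_j-s_j'(L_\varepsilon))$, with $\mu_k=\int_{-\varepsilon}^\varepsilon\psi_k\varphi_1$, $\nu_j=\int_{-\varepsilon}^\varepsilon\varphi_j\psi_1$ the explicit overlap integrals.

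From the first relation, isolating $A_{1,+}+A_{1,-}$, using $|\mu_k/\mu_1|\le(k-\varepsilon^2/\varepsilon_1^2)^{-1}$, Cauchy--Schwarz on $\sum_{k\ge2}|\mu_k/\mu_1|\,|A_{k,+}|$, and the constraint (\ref{est1'}) together with (\ref{Ak2}) and (\ref{estbj1}), I get $|A_{1,+}+A_{1,-}|\le\tau_1\sqrt{\alpha|A_{1,-}|^2-\beta X+r_2(\varepsilon)}+\O(e^{-(\pi L+\delta)/2\varepsilon})$ with $\tau_1$ as close as we wish to $(\sum_{k\ge3}k^{-3})^{1/2}<\frac12$ and $\alpha,\beta\to1$; from the second relation, Cauchy--Schwarz (discarding $\im\sqrt{\beta_j}=\O(e^{-\delta/\varepsilon})$) gives $|A_{1,+}-A_{1,-}|\le\tilde\tau_2\sqrt X+\O(e^{-(\pi L+\delta)/2\varepsilon})$ with $\tilde\tau_2$ as close as we wish to $\Gamma_2=\frac{2\sqrt2}{\pi}\big(-\frac12+\frac{\pi}4{\rm Si}(\pi)\big)^{1/2}\approx0.879$. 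The triangle inequality $2|A_{1,-}|\le|A_{1,+}+A_{1,-}|+|A_{1,+}-A_{1,-}|$ then gives $2|A_{1,-}|\le\tau_1\sqrt{\alpha|A_{1,-}|^2-\beta X+r_2(\varepsilon)}+\tilde\tau_2\sqrt X+\O(e^{-(\pi L+\delta)/2\varepsilon})$; maximizing the right-hand side over $X$ (an elementary one-variable calculation with maximum $\sqrt{\tau_1^2+\beta^{-1}\tilde\tau_2^2}\,\sqrt{\alpha|A_{1,-}|^2+r_2(\varepsilon)}$) yields $2|A_{1,-}|\le\sqrt{\alpha(\tau_1^2+\beta^{-1}\tilde\tau_2^2)}\,|A_{1,-}|+\O(e^{-(\pi L+\delta)/2\varepsilon})$, and since $\sum_{k\ge3}k^{-3}+\Gamma_2^2\le\frac14+\frac8{10}<4$ the coefficient of $|A_{1,-}|$ is $<2$ for $\varepsilon$ small; absorption gives (\ref{atop}).

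The main obstacle is exactly to keep that final absorption constant strictly below $2$. This is what forces the sharp value of $\Gamma_2$, hence the exact overlap integrals $\mu_k,\nu_j$, which in turn depend on the transverse modes being pure sines and cosines on both sides of the aperture --- precisely where the geometric assumption (\ref{hypgeom}) (the neck meeting $\partial\mathcal B$ perpendicularly, with $\partial\mathcal B$ symmetric and convex-or-flat near $M_0$) is used --- and on the sharp bound $(\sum_{k\ge3}k^{-3})^{1/2}<\frac12$ coming from the energy constraint (\ref{est1'}). Everything else is routine tracking of the exponentially small remainders, using Proposition \ref{propb}, (\ref{match2}), (\ref{Ak2}) and (\ref{estbj1}).
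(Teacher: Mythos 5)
Your proposal is correct and follows essentially the same route as the paper: the two-sided evaluation of $\la v,\partial_x v\ra$, $\la v,\varphi_1\ra$ and $\la \partial_x v,\psi_1\ra$ at $\{x=L_\varepsilon\}$, the collapse of (\ref{est1}) to (\ref{est1'}) via (\ref{a-})--(\ref{Ak2}), the two scalar bounds with constants near $(\sum_{k\geq 3}k^{-3})^{1/2}$ and $\Gamma_2$, and the final maximization over $X$ with absorption since $\sum_{k\geq 3}k^{-3}+\Gamma_2^2<4$. Nothing essential differs from the paper's own argument.
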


\section{ End of the proof}

By Assumption {\bf (H)}, we see that the Dirichlet eigenfunction $u_0$ satisfies the hypothesis of \cite{BHM} Lemma 3.1. Then, following the arguments of \cite{BHM} leading to (13) in that paper, and using again \cite{HM}, Proposition 3.1 and Formula (5.13), we conclude that for any $\delta >0$ and any $x\in (0,L)$, there exists $C_1$ such that the resonant state $u_\varepsilon$ verifies (see \cite{BHM}, Formula (13)),
\be\label{utube}
\|u_\varepsilon\|_{L^2([x,L_\varepsilon]\times [-\varepsilon,\varepsilon] )}\geq \frac1{C_0} \varepsilon^{4.5+\delta} e^{-\pi x/2\varepsilon}.
\ee
Using this  estimate, we can now prove as in \cite{MN}, Proposition 8.2, the following proposition, that contradicts the inequality $(\ref{atop})$, and  thus completes  the proof the theorem \ref{mainth}.

\begin{proposition}\sl\label{step4} For any $\delta >0$, there exists $C>0$, such that
\begin{eqnarray}  
   |A_{1,-}| \geq   \frac1{C}  \varepsilon^{4.5+\delta}   e^{- \pi L/2\varepsilon},\end{eqnarray}
   for $\varepsilon >0$ small enough.
   \end{proposition}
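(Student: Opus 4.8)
The plan is to bound $|A_{1,-}|$ from below by relating it to the $L^2$-norm of $v$ (equivalently $u_\varepsilon$) inside the neck, for which the lower bound \eqref{utube} is available. Recall that in the tube $(L-\varepsilon_1,L_\varepsilon)\times(-\varepsilon,\varepsilon)$ we have the modal expansion \eqref{letube}, $v(x,y)=\sum_k\bigl(a_{k,+}e^{\theta_k x/\varepsilon}+a_{k,-}e^{-\theta_k x/\varepsilon}\bigr)\psi_k(y)$, and $A_{k,\pm}=a_{k,\pm}e^{\pm\theta_k L/\varepsilon}$. First I would fix some $x_1\in(0,L)$ and, using \eqref{utube} with $x=x_1$, get $\|v\|_{L^2([x_1,L_\varepsilon]\times[-\varepsilon,\varepsilon])}\geq \frac1{C}\varepsilon^{4.5+\delta}e^{-\pi x_1/2\varepsilon}$ (since $v=u_\varepsilon$ on this region once $x_1>L+\tfrac12\varepsilon_1$-neighborhood issues are handled, i.e. $v=\chi u_\varepsilon$ and $\chi=1$ there). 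By Parseval in $y$, this is $\int_{x_1}^{L_\varepsilon}\sum_k|a_{k,+}e^{\theta_k x/\varepsilon}+a_{k,-}e^{-\theta_k x/\varepsilon}|^2\,dx$. The dominant contribution must come from the decaying exponential of the first mode, $a_{1,-}e^{-\theta_1 x/\varepsilon}$, because $\theta_1=\sqrt{\alpha_1^2-\varepsilon^2\rho}=\frac\pi2+\mathcal O(\varepsilon^2)$ is the smallest exponent, while all modes $k\geq 2$ have $\re\theta_k\geq\frac{3\pi}2-\mathcal O(\varepsilon^2)$, and the growing-exponential coefficients $a_{k,+}$ are controlled by \eqref{atop}-type estimates and \eqref{est1'} (note $\sum_k \re\theta_k|A_{k,+}|^2$ is essentially bounded by $|A_{1,-}|^2$ up to exponentially small and $X\geq0$ terms, so $\sum|A_{k,+}|^2=\mathcal O(|A_{1,-}|^2+e^{-(\pi L+\delta)/\varepsilon})$).

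The key computation: $\int_{x_1}^{L_\varepsilon}|a_{1,-}e^{-\theta_1 x/\varepsilon}|^2dx = |a_{1,-}|^2\int_{x_1}^{L_\varepsilon}e^{-2(\re\theta_1)x/\varepsilon}dx = |a_{1,-}|^2\frac{\varepsilon}{2\re\theta_1}\bigl(e^{-2(\re\theta_1)x_1/\varepsilon}-e^{-2(\re\theta_1)L_\varepsilon/\varepsilon}\bigr)$, which to leading order is $\sim \frac{\varepsilon}{\pi}|a_{1,-}|^2 e^{-\pi x_1/\varepsilon}$. Writing $|a_{1,-}|^2 = |A_{1,-}|^2 e^{2(\re\theta_1)L/\varepsilon}\bigl(1+\mathcal O(e^{-\delta/\varepsilon})\bigr)$ (using $|\im\theta_1|=\mathcal O(e^{-\delta/\varepsilon})$), this contributes $\sim\frac{\varepsilon}{\pi}|A_{1,-}|^2 e^{(2\re\theta_1 L - \pi x_1)/\varepsilon}\approx \frac{\varepsilon}{\pi}|A_{1,-}|^2 e^{\pi(2L-x_1)/\varepsilon}$... but wait, I must be careful: it is $e^{2\re\theta_1(L-x_1)/\varepsilon}\cdot$(something). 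Let me reorganize: $|a_{1,-}e^{-\theta_1 x/\varepsilon}|^2$ at $x$ near $L$ is $|a_{1,-}|^2 e^{-2(\re\theta_1)x/\varepsilon}$, and $|A_{1,-}|=|a_{1,-}|e^{-(\re\theta_1)L/\varepsilon}(1+o(1))$, so $|a_{1,-}e^{-\theta_1 x/\varepsilon}| = |A_{1,-}|e^{\re\theta_1(L-x)/\varepsilon}(1+o(1))$, and the integral over $x\in[x_1,L_\varepsilon]$ of the square is $\sim|A_{1,-}|^2\frac{\varepsilon}{2\re\theta_1}e^{2\re\theta_1(L-x_1)/\varepsilon}\sim\frac{\varepsilon}{\pi}|A_{1,-}|^2 e^{\pi(L-x_1)/\varepsilon}$. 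Comparing with the lower bound $\frac1{C^2}\varepsilon^{9+2\delta}e^{-\pi x_1/\varepsilon}$ does not directly match powers of $e$; so instead I should take the lower bound \eqref{utube} and the upper bound together on the \emph{same} interval and conclude $|A_{1,-}|^2\gtrsim \varepsilon^{9+2\delta}e^{-\pi x_1/\varepsilon}\cdot\frac{\pi}{\varepsilon}e^{-\pi(L-x_1)/\varepsilon} = \pi\varepsilon^{8+2\delta}e^{-\pi L/\varepsilon}$, which after renaming $\delta$ yields $|A_{1,-}|\geq\frac1C\varepsilon^{4.5+\delta}e^{-\pi L/2\varepsilon}$ as claimed — the extra powers of $\varepsilon$ being absorbed into the arbitrary $\delta$.

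The main obstacle will be making rigorous the statement that the $\|v\|_{L^2}$ over $[x_1,L_\varepsilon]\times[-\varepsilon,\varepsilon]$ is (up to the right powers of $\varepsilon$ and exponentially small errors) bounded \emph{above} by the first-mode decaying term: one must show that (i) the cross terms $\int a_{1,+}\overline{a_{1,-}}$ and all $k\geq2$ contributions, and the growing mode $a_{1,+}e^{\theta_1 x/\varepsilon}$, are genuinely negligible on this interval. For the growing first mode one uses $|A_{1,+}|\leq|A_{1,+}+A_{1,-}|+|A_{1,-}|$ together with \eqref{A+B1}, which bounds $|A_{1,+}+A_{1,-}|$ in terms of $\sqrt{|A_{1,-}|^2 + \cdots}$ — so $|A_{1,+}|=\mathcal O(|A_{1,-}|+e^{-(\pi L+\delta)/2\varepsilon})$ — and then $|a_{1,+}e^{\theta_1 x/\varepsilon}|=|A_{1,+}|e^{\re\theta_1(x-L)/\varepsilon}(1+o(1))\leq|A_{1,+}|$ since $x\leq L_\varepsilon<L$; its $L^2$-norm squared over the interval is $\mathcal O(\varepsilon|A_{1,-}|^2)$, which is \emph{smaller} than the $e^{\pi(L-x_1)/\varepsilon}$-enhanced decaying-mode contribution — good. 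For the $k\geq2$ modes, $|a_{k,\pm}e^{\mp\theta_k x/\varepsilon}|$-terms: the $a_{k,+}$ growing part is $\leq|A_{k,+}|$ and $\sum_{k\geq2}|A_{k,+}|^2=\mathcal O(|A_{1,-}|^2+e^{-(\pi L+\delta)/\varepsilon})$ by \eqref{est1'}; the $a_{k,-}$ part is controlled by \eqref{Ak2}, $\sum_{k\geq2}k|A_{k,-}|^2=\mathcal O(\varepsilon^{-1/2}e^{-2\pi L/\varepsilon})$, and evaluated on $x\in[x_1,L]$ the decaying exponential $e^{-\re\theta_k(x-?)}$ only helps — concretely $|a_{k,-}e^{-\theta_k x/\varepsilon}| = |A_{k,-}|e^{\re\theta_k(L-x)/\varepsilon}(1+o(1))\leq |A_{k,-}|e^{\frac{3\pi}2(L-x_1)/\varepsilon}$ for $k\geq2$; summing, $\mathcal O(\varepsilon^{-1/2}e^{-2\pi L/\varepsilon}e^{3\pi(L-x_1)/\varepsilon}) = \mathcal O(\varepsilon^{-1/2}e^{(\pi L - 3\pi x_1)/\varepsilon})$, which for $x_1$ close enough to $L$ (say $x_1 > L/2$... one must check $\pi L - 3\pi x_1 < \pi(L-x_1) $ i.e. $-2x_1<0$, always true, but we also need it $\ll$ the main term $e^{\pi(L-x_1)/\varepsilon}|A_{1,-}|^2$ — since $|A_{1,-}|$ is a priori only $\geq$ (what we're proving), we instead note $e^{-2\pi L/\varepsilon + 3\pi(L-x_1)/\varepsilon} = e^{\pi L/\varepsilon - 3\pi x_1/\varepsilon}$ vs. the RHS lower bound target times the enhancement; cleanest is to choose $x_1$ with $3x_1 > L$, e.g. $x_1 = \tfrac{2L}{3}$, making this $\mathcal O(\varepsilon^{-1/2}e^{-\pi L/\varepsilon})$, negligible against $\varepsilon^{8+2\delta}e^{-\pi L/\varepsilon}$? no — same exponential rate, wrong power direction). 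The honest approach, exactly as in \cite{MN}, Proposition 8.2, is: take $x_1$ close to $L$, write $\|v\|^2_{L^2([x_1,L_\varepsilon]\times[-\varepsilon,\varepsilon])}\leq 2\int_{x_1}^{L_\varepsilon}|a_{1,-}e^{-\theta_1 x/\varepsilon}|^2dx + (\text{remainder})$ where the remainder is shown to be $o$ of the main term using all the above bounds plus the freedom in $\delta$, then invert. I would follow that reference verbatim, citing it, rather than redo the bookkeeping. The only genuinely new input here is that $j_0=0$, which only simplifies matters.
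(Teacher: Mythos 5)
Your route is the same as the paper's: combine the lower bound \eqref{utube} with an upper bound for $\|u_\varepsilon\|^2_{L^2([x_1,L_\varepsilon]\times[-\varepsilon,\varepsilon])}$ obtained from the modal expansion \eqref{letube}, controlling the growing modes through \eqref{est1'} (i.e.\ $\sum_k|A_{k,+}|^2=\O(|A_{1,-}|^2+e^{-(\pi L+\delta)/\varepsilon})$, which is the paper's \eqref{estA+}) and the decaying modes $k\geq 2$ through Appendix A of \cite{MN}; the paper's proof is exactly this and, like you, it refers to the proof of Proposition 8.2 in \cite{MN} for the bookkeeping. So the plan and the final inversion ($\varepsilon^{9+2\delta}\lesssim |a_{1,-}|^2$, hence $|A_{1,-}|\approx|a_{1,-}|e^{-\pi L/2\varepsilon}\gtrsim\varepsilon^{4.5+\delta}e^{-\pi L/2\varepsilon}$) are correct.

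Two slips in your bookkeeping are worth fixing, and doing so removes the hesitations at the end of your argument. First, for the modes $k\geq 2$ you pass to $A_{k,-}$ and bound $|a_{k,-}e^{-\theta_k x/\varepsilon}|\leq |A_{k,-}|e^{\frac{3\pi}{2}(L-x_1)/\varepsilon}$; this is false uniformly in $k$, since $\re\theta_k\sim k\pi/2$, so the amplification factor $e^{\re\theta_k(L-x)/\varepsilon}$ is unbounded in $k$. The correct move (the one behind \eqref{step1}) is to keep these modes in terms of $a_{k,-}$: writing $|a_{k,-}|^2e^{-2x\re\theta_k/\varepsilon}=|a_{k,-}e^{-c\theta_k}|^2e^{-2\re\theta_k(x/\varepsilon-c)}$ and using \eqref{a-} together with $\re\theta_k\geq\re\theta_1+C_0$ for $k\geq 2$, their total contribution to the squared norm is $\O\bigl(\varepsilon^{-C}e^{-2x_1\re\theta_1/\varepsilon}e^{-2C_0x_1/\varepsilon}\bigr)$, i.e.\ smaller than the first-mode exponential $e^{-\pi x_1/\varepsilon}$ by a factor $e^{-2C_0x_1/\varepsilon}$. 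Second, your worry about ``same exponential rate, wrong power direction'' comes from comparing remainders with the unknown main term; the comparison that closes the argument is different for the two kinds of remainders: the term $\O(|A_{1,-}|^2)=\O(|a_{1,-}|^2e^{-2L\re\theta_1/\varepsilon})$ is absorbed into the main term $\varepsilon|a_{1,-}|^2e^{-2x_1\re\theta_1/\varepsilon}$ (ratio $\varepsilon^{-1}e^{-2(L-x_1)\re\theta_1/\varepsilon}\to 0$), while the terms $\O(e^{-(\pi L+\delta)/\varepsilon})$ and the $k\geq2$ contribution above are absorbed by the \emph{known} left-hand side $\frac1C\varepsilon^{9+2\delta}e^{-\pi x_1/\varepsilon}$ coming from \eqref{utube}, against which they are exponentially small. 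With this bookkeeping any fixed $x_1\in(0,L)$ works; no special choice such as $x_1=2L/3$, nor taking $x_1$ close to $L$, is needed, and your computation becomes precisely the paper's (and \cite{MN}'s) proof.
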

   
    \begin{proof} Starting from 
(\ref{est1'}), we see,
\begin{eqnarray}
\label{estA+}
 && \sum_{k\geq 1} |A_{k,+}|^2   \leq  (1+Ce^{-\delta /\varepsilon}) |A_{1,-}|^2+ Ce^{-(\pi L +\delta) /\varepsilon}.
\end{eqnarray}
Then, computing the quantity $\|u_\varepsilon\|_{L^2([x,L]\times [-\varepsilon,\varepsilon] )}$ 
by using the expression (\ref{letube}), we obtain (see \cite{MN}, proof of Proposition 8.2),
\be
\begin{split}
 \|u_\varepsilon\|^2_{L^2([x,L_\varepsilon]\times [-\varepsilon,\varepsilon] )}\leq 
4\sum_{k\geq 1} |A_{k,+}|^2 
+4\sum_{k> 1} |a_{k,-}|^2 
 e^{-2x\re{\theta_k}/\varepsilon } \\
 +\varepsilon |a_{1,-}|^2  
 e^{-2x\re{\theta_1}/\varepsilon }.
 \end{split}
\ee

Using  (\ref{estA+})  and (\ref{a-}), we deduce
\be
\begin{split}
 \label{step1}
 \|u_\varepsilon\|^2_{L^2([x,L_\varepsilon]\times [-\varepsilon,\varepsilon] )}\leq C \varepsilon  |a_{1,-}|^2  
 e^{-2x\re{\theta_1}/  \varepsilon} +  C |a_{1,-}|^2  
 e^{-2L\re{\theta_1}/  \varepsilon}\\
+ C \varepsilon ^{-C} e^{-2x\re{\theta_1}/\varepsilon} e^{-2C_0x/\varepsilon}+C e^{-(\pi  L+\delta)/\varepsilon}, 
 \end{split}
 \ee
and thus, using (\ref{utube}), we finally obtain,
\begin{eqnarray} \label{step3} 
 \varepsilon^{9+2\delta}   \leq  C  |a_{1,-}|^2,  
\end{eqnarray}
and the result is proved.

\end{proof}

\section{An extension to larger dimensions}\label{dim3}

Here, we consider the similar problem in dimension $n\geq 3$, obtained by taking tubes with square sections. That is,  $\mathcal C$ is a regular bounded open subset   of $\R^n$, and we have (in Euclidean coordinates $x=(x_1,\dots, x_n)= (x_1,x')\in\R\times\R^{n-1}$),
\be
\label{hypgeom3}
 \begin{array}{l}
\overline{\mathcal C}\subset \mathcal B;\\
(0,0)\in \partial{\mathcal C} ; (L,0)\in \partial{\mathcal B};\\
{[0,L]}\times\{0\}\subset \overline{\mathcal B}\backslash {\mathcal C};\\
\mbox{Near } M_0:=(L,0),\, {\mathcal B} \mbox{ is convex and } \partial{\mathcal B}
\mbox{ is symmetric with}\\ \mbox{respect to } \{x_j=0\} \mbox{ for all } j\geq 2.
\end{array}
\ee
\begin{remark}\sl In particular, this also contains the case where $\partial{\mathcal B}$ is flat near $M_0$, that is when $\{ (L, x_2,\dots,x_n)\, ;\, |x_j| < \varepsilon_0,\, j=2,\dots,n\}\subset \partial{\mathcal B}$ for some $\varepsilon_0>0$.
\end{remark}
Then, setting $Q_\varepsilon:=\{ (x_2,\dots,x_n)\, ;\, |x_j| < \varepsilon,\, j=2,\dots,n\}$, ${\mathcal T}(\varepsilon):= [-\varepsilon_0, L]\times Q_\varepsilon \cap (\R^n\backslash {\mathcal C})$, and ${\mathbf E}:= \R^n \backslash \overline{\mathcal B}$, we consider the resonances of the resonator $\Omega(\varepsilon):={\mathcal C}\cup{\mathcal T}(\varepsilon)\cup{\mathbf E}$. 

As before, let $\lambda_0$ be an eigenvalue of $-\Delta_{\mathcal{C}}$, and let  $u_0$ be the corresponding normalized eigenfunction. 

In this situation, the lower estimate of \cite{HM} (see also \cite{BHM}) becomes
$$
\im \rho (\varepsilon) ={\mathcal O}(e^{-(1-\delta)\pi L\sqrt{n-1} /\varepsilon}),
$$
where $\rho (\varepsilon)$ stands for any resonance that tends to $\lambda_0$ as $\varepsilon\to 0_+$, and $\delta >0$ is arbitrary.

We assume again,

Assumption ({\bf H}):
\begin{itemize}
\item[] $\lambda_0$ is simple;
\item[] $u_0$ does not vanish on ${\mathcal C}$ near the point $(0,0)$.
\end{itemize}
Then, we have
 \begin{theorem}\sl 
 \label{mainth'}
Under Assume {\bf(H)} and $2\leq n\leq 12$. Then, for any $\delta >0$ there exists $C_\delta >0$ such that, the only resonance $\rho(\varepsilon)$ close to $\lambda_0$ satisfies,
 $$
 |\im \rho(\varepsilon)|\geq  \frac1{C_\delta}e^{-\pi (1+\delta) L\sqrt{n-1}/\varepsilon},
 $$
 uniformly as $\varepsilon\to 0_+$.
 \end{theorem}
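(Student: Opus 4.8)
The plan is to reduce Theorem \ref{mainth'} to the two-dimensional case by re-running the entire argument of Sections \ref{largedisc}--\ref{eastbound} and the aperture analysis, keeping track of the fact that the transverse cross-section is now the $(n-1)$-dimensional square $Q_\varepsilon$ rather than an interval. First I would argue by contradiction, assuming $|\im\rho(\varepsilon)|=\O(e^{-(\pi L\sqrt{n-1}+\delta_0)/\varepsilon})$ along a sequence $\varepsilon\to 0_+$. The exterior estimate (Proposition \ref{LDisc}) goes through verbatim once one works with the complex measure $(\nu_k(r)/\nu_k'(r))^{n-1}dr$ as indicated in the remark following its proof; the Carleman estimates of Sections \ref{eastob}--\ref{eastbound} are stated in arbitrary dimension in \cite{LL,LR}, so Propositions \ref{nearobst} and \ref{estbound} hold with $\pi L$ replaced by $\pi L\sqrt{n-1}$ throughout. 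Hence $u_\varepsilon$ is $\O(e^{-(\pi L\sqrt{n-1}+\delta)/\varepsilon})$ near $\partial Q\setminus(\{L_\varepsilon\}\times Q_{\varepsilon_1})$.

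Next I would set up the separation of variables near the aperture exactly as in Sections 8--11, but now expanding in the Dirichlet eigenfunctions $\varphi_J$ of $-\Delta_{y'}$ on the $(n-1)$-dimensional square $[-\varepsilon_1,\varepsilon_1]^{n-1}$ (outside) and $\psi_K$ on $[-\varepsilon,\varepsilon]^{n-1}$ (inside the tube), indexed by multi-indices $J,K\in(\Z_{>0})^{n-1}$. The transverse eigenvalues are $|K|^2\pi^2/(4\varepsilon^2)$ with $|K|^2=\sum_i K_i^2$, so the smallest one corresponds to $K=(1,\dots,1)$, giving the decay rate $e^{-\pi L\sqrt{n-1}/\varepsilon}$ for the fundamental mode $\psi_{(1,\dots,1)}$ — this is exactly the exponent in the theorem. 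The matching coefficients $A_{K,\pm}:=a_{K,\pm}e^{\pm\theta_K L/\varepsilon}$ (with $\theta_K=\sqrt{|K|^2\pi^2/4-\varepsilon^2\rho}$, $\theta_J^{\mathbf E}=\sqrt{|J|^2\pi^2/4-\varepsilon^2\rho}$ for the outer modes rescaled to $\varepsilon_1$) are governed by the same three bilinear identities: $\la v,\partial_x v\ra$, $\la v,\varphi_{(1,\dots,1)}\ra$ and $\la\partial_x v,\psi_{(1,\dots,1)}\ra$ on the aperture. The overlap integrals $\mu_K=\int\psi_K\varphi_{(1,\dots,1)}$ and $\nu_J=\int\varphi_J\psi_{(1,\dots,1)}$ now factor as products over the $n-1$ transverse coordinates of the one-dimensional overlaps appearing in Section 10, so the key sums $\sum_{K\neq(1,\dots,1)}|K|\,|\mu_K/\mu_{(1,\dots,1)}|^2$ and the analogue $\tau_2^2$ become $(n-1)$-fold products/convolutions of the one-dimensional quantities. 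Running the Cauchy--Schwarz argument and the elementary maximisation of $Y\mapsto\tau_1\sqrt{A^2-\beta Y^2}+\tau_2 Y$ as in the proof of the Proposition preceding Section 12, one obtains $2|A_{(1,\dots,1),-}|\le\sqrt{\alpha(\tau_1^2+\beta^{-1}\tilde\tau_2^2)}\,|A_{(1,\dots,1),-}|+\O(e^{-(\pi L\sqrt{n-1}+\delta)/2\varepsilon})$, hence $|A_{(1,\dots,1),-}|=\O(e^{-(\pi L\sqrt{n-1}+\delta)/2\varepsilon})$, provided the constant $\tau_1^2+\Gamma_2^2$ in its $n$-dimensional incarnation is strictly less than $4$. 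The lower bound $|A_{(1,\dots,1),-}|\ge\varepsilon^{C}e^{-\pi L\sqrt{n-1}/2\varepsilon}/C$ comes, as in Section 12, from the interior estimate \cite{BHM,HM} for $u_0$ near $(0,0)$ propagated along the tube; this gives the contradiction and proves the theorem.

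The main obstacle — and the reason for the restriction $n\le 12$ — is precisely the verification that the limiting contraction constant stays below $4$ (equivalently, that $\sqrt{\tau_1^2+\beta^{-1}\tilde\tau_2^2}<2$ in the limit $\varepsilon\to0$). In dimension two the relevant bound was $\sum_{k\ge3}k^{-3}+\Gamma_2^2\le\frac14+\frac{8}{10}<4$. In dimension $n$ the quantity $\Gamma_2$ is replaced by an $(n-1)$-fold product of one-dimensional factors — essentially $\Gamma_2^{(n)}$ built from $\int_0^\infty x\sin^2((x-1)\pi/2)(x^2-1)^{-2}dx$ in one variable and from $\ell^2$-sums of the form $\sum 1/(|K|\prod_i(K_i^2-\cdots))$ for the $\tau_1$-type term — and these grow with $n$. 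One must show that the resulting constant remains $<2$ exactly up to $n=12$ and fails (or cannot be controlled by this method) beyond; this is the delicate quantitative estimate, and I would isolate it as a separate lemma giving explicit bounds on the $n$-dependent sums, checking the inequality numerically/analytically for each $n\le 12$. Everything else — the Carleman propagation, the aperture expansion, the bilinear matching identities, the WKB/Hankel asymptotics of Section \ref{largedisc} and the Appendix — transfers with only cosmetic changes (replacing single indices by multi-indices and $\pi L$ by $\pi L\sqrt{n-1}$).
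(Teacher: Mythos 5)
Your proposal follows essentially the same route as the paper: reduce to the two-dimensional argument with multi-index transverse modes on the square cross-sections, rerun the three matching identities at the aperture, and trace the restriction $n\le 12$ to the requirement that the limiting contraction constant $\tilde\tau_1^2+\tilde\tau_2^2$ stay strictly below $4$. The only place where the paper is sharper than the lemma you defer to a numerical check is this last point: it shows $\tilde\tau_2^2\le L_1/L_2<\frac8{10}$ \emph{uniformly} in $n$ (the per-dimension factor $4\sqrt{L_2}/(\pi\sqrt2)$ equals $1$ exactly), so the growth in $n$ comes only from the $\tilde\tau_1$-type sum, which is bounded by $\frac1{\sqrt{n-1}}\bigl((\pi^2/8)^{n-1}-1\bigr)$ and stays below $3.2$ precisely for $2\le n\le 12$.
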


 \begin{proof} 
 The computations are very similar to those in dimension 2, and we highlight here only what is specific to dimension n. 
 The notations are similar, but their meaning is modified as follows. 
 For $k=(k_2,\dots, k_n)\in\N^{n-1}$ (where $\N:= \{ 1,2,3,\dots\}$), we set
\begin{eqnarray*}
&& \alpha_k := \left(\frac{k_2\pi}2,\dots,\frac{k_n\pi}2\right)\in \R^{n-1};\\
&& \theta_k:= \sqrt{|\alpha_k|^2-\varepsilon^2\rho(\varepsilon)};\\
&& \beta_k:= |\alpha_k|^2\varepsilon_1^{-2}-\rho(\varepsilon);\\
&& \psi_k(x'):= \psi_{k_2}(x_2)\dots\psi_{k_n}(x_n);\\
&& \varphi_k(x'):= \varphi_{k_2}\dots(x_2)\varphi_{k_n}(x_n).
\end{eqnarray*}

(Here, $|k|$ stands for the Euclidean norm of $k$ in $\R^{n-1}$.) With these notations, the  formulas (\ref{letube})-(\ref{match2}) remain valid with the following changes:

\begin{itemize}
\item $\sum_{k=1}^\infty$ must be replaced by $\sum_{k\in\N^{n-1}}$ , and analog for $\sum_{j=1}^\infty$;
\item $y$ must be replaced by $x'$;
\item $(-\varepsilon, \varepsilon)$ and $(-\varepsilon_1,\varepsilon_1)$ must be respectively replaced by $Q_\varepsilon$ and $Q_{\varepsilon_1}$ (where $\varepsilon_1$ is taken such that $\frac{(n-1)\pi^2}{4\varepsilon_1^2} >\lambda_0$).
\end{itemize}

Computing in two ways the quantities $\la v, \partial_x v\ra_{\{L\}\times Q_\varepsilon}$, $\la v,\varphi_{1,\dots,1}\ra_{\{L\}\times Q_\varepsilon}$, and $ \la\partial_x v,\psi_{1,\dots,1}\ra_{\{L\}\times Q_\varepsilon}$, we find the following analogs of (\ref{est1'})-(\ref{A-B}):
\begin{eqnarray*}
 && \sum_{k\in \N^{n-1}}(|k|-C|k|^{-1}e^{-\delta/\varepsilon})|A_{k,+}|^2  \\
 && \hskip 0.2cm\leq  (1+Ce^{-\delta /\varepsilon}) |A_{1,\dots,1,-}|^2-\frac{2\varepsilon}{\pi}(1+r_1)\sum_{j\in\N^{n-1}} \re\sqrt{\beta_j}|b_{j}|^2+r_2 ;\\
 && |A_{1,\dots,1,+} + A_{1,\dots,1,-}| \leq Ce^{-(\pi L\sqrt{n-1}+\delta)/2\varepsilon} + \sum_{|k|>\sqrt{n-1}}|\frac{\mu_k }{\mu_{1,1}}A_{k,+}|\\
 &&\hskip 7cm +\frac{C}{\sqrt{\varepsilon}}e^{-\pi L\sqrt{4+(n-2)^2}/2\varepsilon};\\
 && | A_{1,1,+} - A_{1,1,-}| \leq  \frac{\varepsilon}{|\theta_{1,\dots,1}|} \sum_{j\in\N^{n-1}}|\nu_j\sqrt{\beta_j}b_{j}| +Ce^{-(\pi L\sqrt{n-1}+\delta)/2\varepsilon},
\end{eqnarray*}
where we have set
$$
 \nu_{j}:= \nu_{j_2}\dots \nu_{j_n}\quad ;\quad\mu_{k}:=\mu_{k_2}\dots\mu_{k_n},
$$
and with,
$$
r_1 =\O (e^{-\delta /\varepsilon})\quad ; \quad r_2=\O(e^{-(\pi L\sqrt{n-1} +\delta)/\varepsilon}).
$$
Using the fact that $\mu_{k_2,\dots,k_n}/\mu_{1,\dots,1}\leq (k_2-\frac{\varepsilon^2}{\varepsilon_1^2})^{-1}\dots(k_n-\frac{\varepsilon^2}{\varepsilon_1^2})^{-1}$ ($k_2,\dots,k_n$ odd), this also gives
\begin{eqnarray}
&& |A_{1,\dots,1,+} + A_{1,\dots,1,-}|\nonumber\\
 \label{A+-A-}
&& \hskip 1cm  \leq \tilde\tau_1\left( |A_{1,\dots,1,-}|^2 -\frac{\varepsilon}{\varepsilon_1}\sum_{j\in\N^{n-1}}|j||b_{j}|^2 \right)^{\frac12} +Ce^{-(\pi L\sqrt2+\delta)/2\varepsilon} ,
\end{eqnarray}
where $\tilde\tau_1$ can be taken arbitrarily close to 
\be
\label{tautilde1}
J_1:=(\sum_{|k|^2 > n-1\,;\, k_j\,{\rm odd}}|k|^{-1}k_2^{-2}\dots k_n^{-2})^{\frac12} 
=(\sum_{k_j\,{\rm odd}}|k|^{-1}k_2^{-2}\dots k_n^{-2} -\frac1{\sqrt{n-1}})^{\frac12}.
\ee
A rough estimate on $J_1$ can be obtained by writing,
$$
J_1^2\leq \frac1{\sqrt{n-1}}\left(( \sum_{\ell \in\N \,{\rm odd}}\frac1{\ell^2})^{n-1}-1\right)\leq  \frac1{\sqrt{n-1}}\left( \left(\frac{\pi^2}{8}\right)^{n-1}-1\right).
$$ 

In a similar way we obtain,
\be
 \label{A++A-}
| A_{1,\dots,1,+} - A_{1,\dots,1,-}| \leq   \tilde\tau_2\left(\frac{\varepsilon}{\varepsilon_1}\sum_{j\in\N^{n-1}}|j||b_{j}|^2 \right)^{\frac12} +Ce^{-\pi L(\sqrt{n-1} +\delta )/2\varepsilon},
\ee
where $\tilde\tau_2$ can be taken arbitrarily close to the quantity
 \be
 \label{tautilde2}
 J_2= \frac{4^{n-1}}{(\pi\sqrt 2)^{n-1}\sqrt{n-1}}\left(\int_{\R_+^{n-1}}\frac{|x|\sin^2((x_1-1)\pi/2)\dots\sin^2((x_{n-1}-1)\pi/2)}{(x_1^2-1)^2\dots(x_{n-1}^2-1)^2}dx_1\dots dx_{n-1}\right)^{\frac12}.
\ee
Writing $|x|\leq |x_1|+\dots +|x_{n-1}|$ and making permutations on the variables, we obtain,
$$
J_2\leq  \frac{4^{n-1}}{(\pi\sqrt 2)^{n-1}}\left( \int_0^{+\infty}\frac{t\sin^2((t-1)\pi/2)}{(t^2-1)^2}dt\right)^{\frac12}\left( \int_0^{+\infty}\frac{\sin^2((t-1)\pi/2)}{(t^2-1)^2}dt\right)^{\frac{n-2}2}
$$
Setting
$$
L_1:=  \int_0^{+\infty}\frac{t\sin^2((t-1)\pi/2)}{(t^2-1)^2}dt\quad ;\quad L_2:= \int_0^{+\infty}\frac{\sin^2((t-1)\pi/2)}{(t^2-1)^2}dt,
$$
it becomes,
$$
J_2\leq \left(\frac{L_1}{L_2}\right)^\frac12 \left( \frac{4\sqrt{L_2}}{\pi\sqrt 2}\right)^{n-1}.
$$
The integrals $L_1$ and $L_2$ can be computed exactly, and one finds,
 $$
L_1 = -\frac12 +\frac{\pi}4 {\rm Si}(\pi)\approx 0.9545\quad ; \quad L_2 =\frac{\pi^2}8.
 $$
 In particular, for $\varepsilon$ small enough, we have
 \be
 \label{tautilde}
 \tilde\tau_1^2 + \tilde\tau_2^2 <  \frac8{10}+\frac1{\sqrt{n-1}}\left( \left(\frac{\pi^2}{8}\right)^{n-1}-1\right),
\ee
and one can check that this quantity is strictly less than 4 when $2\leq n\leq 12$.

At this point, we can complete the proof as in the 2 dimensional case.\end{proof}

\section{Appendix}
We prove Lemma \ref{lemmeHankel}. For $k\geq 0$, we can represent $h_k(R)=H_k(R\sqrt\rho)$ by the formula (see, e.g., \cite{Wa}),
$$
h_k(R)=\frac1{i\pi}\int_{-\infty}^{+\infty +i\pi}e^{R\sqrt\rho\sinh t-kt}dt,
$$
that we split into,
$$
\begin{aligned}
h_k(R)&=\frac1{i\pi}\int_{-\infty}^0e^{R\sqrt\rho\sinh t-kt}dt+\frac1{\pi}\int_0^\pi e^{i(R\sqrt\rho\sin \theta-k\theta)}d\theta\\
& \hskip 5cm +\frac1{i\pi}\int_0^{+\infty}e^{-R\sqrt\rho\sinh t-kt-ik\pi}dt\\
&=\frac1{i\pi}\int_{-\infty}^0e^{R\sqrt\rho\sinh t-kt}dt+\O(1).
\end{aligned}
$$
In the latter integral, we make the change of variable: $t\mapsto -t-\ln k$, and we obtain,
$$
h_k(R)=\frac{k^k}{i\pi}\int_{-\ln k}^{+\infty} e^{k\psi (t)}a_k(t)dt +\O(1),
$$
with,
$$
\psi (t):=t-R\sqrt\rho e^t/2\quad ;\quad a _{k}(t):=e^{R\sqrt\rho e^{-t}/2k}.
$$
Here, we observe that, for any $j\geq 0$, we have $a_{k,R}^{(j)}(t)=\O(1)$ uniformly on $[-\ln k, +\infty)$. Moreover, the phase function $\psi$ admits a unique critical point at $t_c:=\ln (2/R\sqrt\rho)$, and $ \psi''(t_c)= -1$. In particular, since also $\re t_c>0$ and  $\im t_c\to 0$ as $\varepsilon \to 0$,  we can apply the method of steepest descent in order to estimate this integral, and we obtain,
$$
h_k(R)=-i\sqrt{\frac{2}{\pi}}\,k^{k-\frac12} e^{k\psi (t_c)}\left(a_k(t_c) +\O(k^{-1})\right)+\O(1),
$$
that is,
$$
h_k(R)=-i\sqrt{\frac{2}{\pi}}\,k^{k-\frac12}\left(\frac2{eR\sqrt\rho}\right)^k\left(a_k(t_c) +\O(k^{-1})\right)+\O(1).
$$
Since $a_k(t_c) =1+\O(k^{-1})$, the result follows.

\end{document}